\newcounter{res}[section]
\numberwithin{res}{section}
\newtheorem{thm}[res]{Theorem}
\newtheorem{lem}[res]{Lemma}
\newtheorem{prop}[res]{Proposition}
\newtheorem*{firstthm}{First theorem}
\newtheorem*{scndthm}{Second theorem}
\theoremstyle{definition}
\newtheorem{dfn}[res]{Definition}
\newtheorem{req}[res]{Remark}
\newtheorem{exa}[res]{Example}
\renewcommand{\S}{\ensuremath{\mathbb{S}}}
\newcommand{\NN}{\mathbb N}
\newcommand{\RR}{\mathbb R} 
\renewcommand {\epsilon}{\varepsilon}
\renewcommand\marginpar[1]{}
\newcommand\kupc[1]{\ensuremath\left\langle #1 \right\rangle_c}
\newcommand{\imagesfolder}{.}
\newcommand{\ie}{i.~e.~}
\newcommand{\resp}{resp.{} }
\newcommand{\cwebvert}[2][0.8]{\vcenter{\hbox{\!
 \begin{tikzpicture}[scale={#1},decoration={markings, mark=at
     position 0.5 with {\arrow{>}}},postaction={decorate}]
      \draw[postaction= {decorate}, color=dark#2] (0.5,0.5) -- (0.5, 2.5);
      \node at (1,1.5) {\tiny #2};
  \end{tikzpicture}}}}
\newcommand{\cwebbigon}[4][0.9]{\vcenter{\hbox{\!
 \begin{tikzpicture}[scale={#1},decoration={markings, mark=at
     position 0.5 with {\arrow{>}}},postaction={decorate}]
      \draw[postaction= {decorate},color=dark#2] (0.5,0.5) -- (0.5, 1);
      \draw[postaction= {decorate},color=dark#2] (0.5,2) -- (0.5, 2.5);
      \draw[postaction= {decorate},color= dark#3] (0.5,2) ..controls (0,1.5) and (0,1.5).. (0.5, 1);
      \draw[postaction= {decorate},color=dark#4] (0.5,2) ..controls (1,1.5) and (1,1.5).. (0.5, 1);
      \node at (1,0.7) {\tiny #2};
      \node at (1,2.2) {\tiny #2};      
      \node at (-0.5,1.5) {\tiny #3};
      \node at (1.5,1.5) {\tiny #4};
\end{tikzpicture}}}}  
\newcommand{\scwebsquare}[4][0.8]{\vcenter{\hbox{\!
 \begin{tikzpicture}[scale={#1},decoration={markings, mark=at
     position 0.5 with {\arrow{>}}},postaction={decorate}]
      \draw[postaction = {decorate},color =dark#2 ] (0.5,0.5)--(1,1);
      \draw[postaction = {decorate},color =dark#2 ] (2.5,2.5)--(2,2);
      \draw[postaction = {decorate},color =dark#4 ] (2,1)--(1,1);
      \draw[postaction = {decorate},color =dark#3 ] (2,1)--(2,2);
      \draw[postaction = {decorate},color =dark#2 ] (2,1)--(2.5,0.5);
      \draw[postaction = {decorate},color =dark#4 ] (1,2)--(2,2);
      \draw[postaction = {decorate},color =dark#3 ] (1,2)--(1,1);
      \draw[postaction = {decorate},color =dark#2 ] (1,2)--(0.5,2.5);
\node at (0.5,0.4) {\tiny #2 };
\node at (2.5, 0.4) {\tiny #2 };
\node at (0.5, 2.6) {\tiny #2 };
\node at (2.5,2.6) {\tiny #2 };
\node at (1.5, 0.8) {\tiny #4 };
\node at (1.5,2.2) {\tiny #4 };
\node at (0.5,1.5) {\tiny #3 };
\node at (2.5,1.5) {\tiny #3 }; 
\end{tikzpicture}}}}
\newcommand{\dchwebsquare}[4][0.8]{\vcenter{\hbox{\!
 \begin{tikzpicture}[scale={#1},decoration={markings, mark=at
     position 0.5 with {\arrow{>}}},postaction={decorate}]
      \draw[postaction = {decorate},color =dark#2 ] (0.5,0.5)--(1,1);
      \draw[postaction = {decorate},color =dark#3 ] (2.5,2.5)--(2,2);
      \draw[postaction = {decorate},color =dark#3 ] (2,1)--(1,1);
      \draw[postaction = {decorate},color =dark#4 ] (2,1)--(2,2);
      \draw[postaction = {decorate},color =dark#2 ] (2,1)--(2.5,0.5);
      \draw[postaction = {decorate},color =dark#2 ] (1,2)--(2,2);
      \draw[postaction = {decorate},color =dark#4 ] (1,2)--(1,1);
      \draw[postaction = {decorate},color =dark#3 ] (1,2)--(0.5,2.5);
\node at (0.5,0.4) {\tiny #2 };
\node at (2.5, 0.4) {\tiny #2 };
\node at (0.5, 2.6) {\tiny #3 };
\node at (2.5,2.6) {\tiny #3 };
\node at (1.5, 0.8) {\tiny #3 };
\node at (1.5,2.2) {\tiny #2 };
\node at (0.5,1.5) {\tiny #4 };
\node at (2.5,1.5) {\tiny #4 };
 \end{tikzpicture}}}}
\newcommand{\dcvwebsquare}[4][0.8]{\vcenter{\hbox{\!
 \begin{tikzpicture}[scale={#1},decoration={markings, mark=at
     position 0.5 with {\arrow{>}}},postaction={decorate}]
      \draw[postaction = {decorate},color =dark#2 ] (0.5,0.5)--(1,1);
      \draw[postaction = {decorate},color =dark#3 ] (2.5,2.5)--(2,2);
      \draw[postaction = {decorate},color =dark#4 ] (2,1)--(1,1);
      \draw[postaction = {decorate},color =dark#2 ] (2,1)--(2,2);
      \draw[postaction = {decorate},color =dark#3 ] (2,1)--(2.5,0.5);
      \draw[postaction = {decorate},color =dark#4 ] (1,2)--(2,2);
      \draw[postaction = {decorate},color =dark#3 ] (1,2)--(1,1);
      \draw[postaction = {decorate},color =dark#2 ] (1,2)--(0.5,2.5);
\node at (0.5,0.4) {\tiny #2 };
\node at (2.5, 0.4) {\tiny #3 };
\node at (0.5, 2.6) {\tiny #2 };
\node at (2.5,2.6) {\tiny #3 };
\node at (1.5, 0.8) {\tiny #4 };
\node at (1.5,2.2) {\tiny #4 };
\node at (0.5,1.5) {\tiny #3 };
\node at (2.5,1.5) {\tiny #2 };
 \end{tikzpicture}}}}
\newcommand{\cwebtwovert}[3][0.8]{
\vcenter{\hbox{\!
 \begin{tikzpicture}[scale={#1},decoration={markings, mark=at
     position 0.5 with {\arrow{>}}},postaction={decorate}]
      \draw[postaction= {decorate},color =dark#2 ] (0.5,0.5) .. controls (1,1) and (1,2) .. (0.5, 2.5);
      \draw[postaction= {decorate},color =dark#3 ] (2.5,2.5) .. controls (2,2) and (2,1) .. (2.5, 0.5);
\draw[left] (1,1.5)node {\tiny #2 };
\draw[right] (2,1.5) node {\tiny #3};
 \end{tikzpicture}}}}
\newcommand{\cwebtwohori}[3][0.8]{\vcenter{\hbox{\!
 \begin{tikzpicture}[scale={#1},decoration={markings, mark=at
     position 0.5 with {\arrow{>}}},postaction={decorate}]
      \draw[postaction= {decorate},color =dark#2 ] (0.5,0.5) .. controls (1,1) and (2,1) .. (2.5, 0.5);
      \draw[postaction= {decorate},color =dark#3 ] (2.5,2.5) .. controls (2,2) and (1,2) .. (0.5, 2.5);
\draw[above] (1.5, 1) node {\tiny #2 };
\draw[below] (1.5,2) node {\tiny #3 };
 \end{tikzpicture}}}}
\newcommand{\cwebIvertS}[4][0.6]{
\vcenter{\hbox{\!
 \begin{tikzpicture}[scale={#1},decoration={markings, mark=at
     position 0.5 with {\arrow{>}}},postaction={decorate}]
      \draw[>-, color=dark#2] (0.5,0.5) -- (1.5, 1);
      \draw[>-, color=dark#3] (2.5,0.5) -- (1.5, 1);
      \draw[postaction={decorate}, color=dark#4] (1.5,2) -- (1.5,1);
      \draw[->, color=dark#2] (1.5,2)-- (0.5,2.5);
      \draw[->, color=dark#3] (1.5,2)-- (2.5,2.5);
\node at (0.5,0.4) {\tiny #2 };
\node at (2.5, 0.4) {\tiny #3 };
\node at (0.5, 2.6) {\tiny #2 };
\node at (2.5,2.6) {\tiny #3 };
\draw[left] (1.5, 1.5) node {\tiny \textrm{#4}};
\end{tikzpicture}}}}
\newcommand{\cunocross}[1][0.35]{
\vcenter{\hbox{\!
    \begin{tikzpicture}[scale= #1]
      \draw[dotted] (0,0) circle (0.5);
      \draw (135:0.5)-- (-45:0.5);
      \fill[white] (0,0) circle (0.2);
      \draw (-135:0.5)-- (45:0.5);
    \end{tikzpicture}
}
}}
\newcommand{\cunotwoh}[1][0.35]{
\vcenter{\hbox{\!
    \begin{tikzpicture}[scale= #1]
      \draw[dotted] (0,0) circle (0.5);
      \draw (135:0.5).. controls (135: 0.2) and (45:0.2).. (45:0.5);
%      \fill[white] (0,0) circle (0.2);
      \draw (-135:0.5).. controls (-135: 0.2) and (-45:0.2).. (-45:0.5);
    \end{tikzpicture}
}
}}
\newcommand{\cunotwov}[1][0.35]{
\vcenter{\hbox{\!
    \begin{tikzpicture}[scale= #1]
      \draw[dotted] (0,0) circle (0.5);
      \draw (-45:0.5).. controls (-45: 0.2) and (45:0.2).. (45:0.5);
%      \fill[white] (0,0) circle (0.2);
      \draw (-135:0.5).. controls (-135: 0.2) and (135:0.2).. (135:0.5);
    \end{tikzpicture}
}
}}
\title{On edge-colorings of planar bicubic graphs}
\author{Louis-Hadrien Robert} \thanks{Supported by a grant from the LabEx IRMIA}
\thanks{This note is part of the author PhD thesis.}
 \address{robert@math.unistra.fr \\ IRMA \\
7, rue René Descartes \\
67084 Strasbourg Cedex \\
France \\}
\date{\today}
\newcommand{\sll}{\ensuremath{\mathfrak{sl}}}
\definecolor{bleufonce}{rgb}{0,0,0.3}
\definecolor{vertfonce}{rgb}{0,0.3,0}
\definecolor{darkred}{rgb}{0.8,0,0}
\definecolor{darkblue}{rgb}{0,0,0.5}
\newcommand{\blue}{\mathrm{\color{darkblue}blue}}
\newcommand{\red}{\mathrm{\color{darkred}red}}
\definecolor{darkgreen}{rgb}{0,0.6,0}
\newcommand{\green}{\mathrm{\color{darkgreen}green}}
\begin{document}
%\pagestyle{fancy}
% \pagestyle{fancyplain}
% \fancyhf{}
% \lhead{\fancyplain{}{\small \theauthor}}
% \rhead{\fancyplain{}{\small \today}}
% \cfoot{\fancyplain{}{\small \thepage}}
\maketitle
\setcounter{tocdepth}{1}
\tableofcontents

\section*{Introduction, motivations}
\label{sec:intr-motiv}

This note is concerned with edge-colorings of bicubic plane multi-graphs (later on called webs see definition \ref{dfn:web}). It is separated in two sections widely independent. The first one introduces a degree associated with a coloring, and gives formula to compute $\kupc{w}$ the graded number of colorings of a given $w$:

\begin{firstthm}
  We have the following local relations:
  \begin{align*}
   \kupc{\!\websquare[0.4]} &= \kupc{\!\webtwovert[0.4]} + \kupc{\!\webtwohori[0.4]}, \\
   \kupc{\!\webbigon[0.4]}  &= [2] \cdot \kupc{\!\webvert[0.4]},\\
   \kupc{\!\webcircle[0.4]} &= \kupc{\!\webcirclereverse[0.4]} = [3]
  \end{align*}
where $[n]$ stands for $\frac{q^n - q^{-n}}{q-q^{-1}}$ and $q$ is an inderminate.
\end{firstthm}

The second section deals with Kempe equivalence of edge-colorings of webs. We prove the following theorem:

\begin{scndthm}
  Let $w$ be a web, then all edge-colorings of $w$ are Kempe-equivalent.
\end{scndthm}

This fact is actually already known (at least for 3-connected bicubic planar graphs) and due to Fisk~\cite{MR0498207}. The proof given here is different and deals directly with the graphs rather than their line-graphs. Additionnaly we prove that the gradings defined in the first section behaves well under Kempe moves (see~\ref{prop:pm2}).

\subsection*{Motivations}
Webs appear naturally as an efficient combinatorial tool in the representation theory of the Lie algebra $\sll_3$. Web colorings enhance this tool a little more allowing to deal with bases of $\sll_3$-modules, one may compare them with. The first theorem is both a $q$-deformation of a result of Jaeger~\cite{MR1172374} about colorings, and a coloring interpretation of a theorem from Kuperberg~\cite{MR1403861}. 

Kempe equivalence is famous for having been introduce by Kempe in his false proof of the four colors theorem. Since then, it as been widely used in graph coloring theory (we refer to the introduction of \cite{MR2279183} for details). One application of the second theorem is given in \cite{LHRThese} where it helps to compute traces of the identity morphisms in a foam context.

The connection between graphs and representation theory of Hopf algebras is well-known (see~\cite{MR1659228, MR1036112}) and leads to beautiful construction such that the Jones polynomial. However, to the author knowledge, colorings are not very used. he believes that one could take benefit to bring this to areas closer. 

\subsection*{Acknowledgment}
The author thanks Roland Grappe and Lukas Lewark for showing interest for the subject. The author is not familiar with the literature on graph theory and apologize for eventual weird notations, missing citation, etc.

\section{Counting colorings with degrees}
\label{sec:count-color-with}

  \begin{dfn}[Kuperberg, \cite{MR1403861}]\label{dfn:web}
  A \emph{closed web} is a 3-regular oriented multi-graph (with possibly some vertex-less loops) smoothly embedded in $\RR^2$ such that every vertex is either a sink either a source.
\end{dfn}
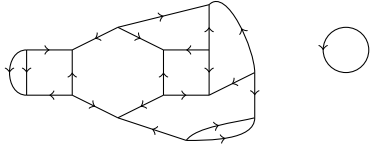
\begin{figure}[!ht]
  \centering
  \begin{tikzpicture}[yscale= 0.6, xscale= 0.6]
    \begin{scope}
   [yscale = {1}, xscale={1},decoration={markings, mark=at
     position 0.5 with {\arrow{>}}},postaction={decorate}]
\coordinate (A) at (0,0);
\coordinate (B) at (1,0);
\coordinate (C) at (2,-0.5);
\coordinate (D) at (3,0);
\coordinate (E) at (4,0);
\coordinate (F) at (5,0.5);
\coordinate (A1) at (0,1);
\coordinate (B1) at (1,1);
\coordinate (C1) at (2,1.5);
\coordinate (D1) at (3,1);
\coordinate (E1) at (4,1);
\coordinate (F1) at (4,2);
\coordinate (G) at (5,-0.5);
\coordinate (H) at (3.5,-1);

\draw[postaction=decorate] (A1) --(A);
\draw[postaction=decorate] (A1) .. controls +(-0.5,0)  and  +(-0.5,0).. (A);
\draw[postaction=decorate] (A1) -- (B1);
\draw[postaction=decorate] (B)-- (A);
\draw[postaction=decorate] (B)--(B1);
\draw[postaction=decorate] (B)--(C);
\draw[postaction=decorate] (C1)--(B1);
\draw[postaction=decorate] (C1)--(D1);
\draw[postaction=decorate] (C1)--(F1);
\draw[postaction=decorate] (D)--(C);
\draw[postaction=decorate] (D)--(D1);
\draw[postaction=decorate] (D)--(E);
\draw[postaction=decorate] (F) -- (G);
\draw[postaction=decorate] (F)-- (E);
\draw[postaction=decorate] (F) .. controls +(0,0.5) and  +(0.4,0.4) .. (F1);
\draw[postaction=decorate] (E1)--(F1);
\draw[postaction=decorate] (E1)--(D1);
\draw[postaction=decorate] (E1)--(E);
\draw[postaction=decorate] (H)--(C);
\draw[postaction=decorate] (H) .. controls +(0.5,0) and  +(0,-0.5) .. (G);
\draw[postaction=decorate] (H) .. controls +(0.2,0.3) and  +(-0.4,-0.1) .. (G);
\draw[postaction=decorate] (7,1) circle (0.5cm);
\end{scope}
  \end{tikzpicture}  
  \caption{Example of a closed web.}
  \label{fig:example-closed-web}
\end{figure}
\begin{req}
  The orientation condition is equivalent to say that the graph is bipartite (by sinks and sources). The vertex-less loops may be a strange notion from the graph theoretic point of view, to prevent this we could have introduced some meaningless 2-valent vertices\footnote{In this case the orientation condition is: around each vertex the flow module 3 is preserved.}, but then the coloring conditions would be a little less natural to express.
\end{req}
The following proposition is the base of the argumentation of the first and the second theorem:
\begin{prop}\label{prop:closed2elliptic}
  Every closed web contains at least a circle, a digon or a square.
\end{prop}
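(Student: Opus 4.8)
The plan is to argue by an Euler-characteristic count applied to the planar graph underlying the web. A closed web $w$ is a $3$-regular multigraph embedded in $S^2 = \RR^2 \cup \{\infty\}$; write $V$, $E$, $F$ for the number of vertices, edges and faces (including the outer face), with the convention that a vertex-less loop contributes a face bounded by a single edge with no vertices. Since the graph is $3$-regular we have $2E = 3V$, and Euler's formula gives $V - E + F = 2$, hence $F = 2 + E - V = 2 + E/3$. First I would treat the degenerate cases separately: if $w$ contains a vertex-less loop, that loop bounds an innermost disk which is a circle, and we are done; so assume from now on that $w$ has at least one vertex and no vertex-less loops.

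Next I would set up the averaging argument. Assign to each face $f$ its number of bounding edges $\ell(f)$ (counted with multiplicity, so that a face glued to itself along an edge counts that edge twice, and a digon has $\ell = 2$, a square $\ell = 4$). Then $\sum_{f} \ell(f) = 2E$. Suppose, for contradiction, that $w$ contains no circle, no digon and no square: then every face satisfies $\ell(f) \geq 3$, and moreover $\ell(f) \neq 3$ as well — this last point is the one place where bipartiteness (the sink/source orientation condition) enters, since in a bipartite graph every closed walk has even length, so there are no faces bounded by an odd number of edges. Hence in fact $\ell(f) \geq 4$ for every face, and combined with the exclusion of squares we could even push to $\ell(f) \geq 6$; but $\ell(f) \geq 4$ already suffices. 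Then $2E = \sum_f \ell(f) \geq 4F = 4(2 + E/3)$, which rearranges to $2E - \tfrac{4}{3}E \geq 8$, i.e. $\tfrac{2}{3}E \geq 8$, giving no immediate contradiction — so the bound $\ell(f)\ge 4$ is not quite enough and one must use the sharper $\ell(f) \geq 6$ coming from the no-square (and no-odd-face) hypothesis. With $\ell(f) \geq 6$ we get $2E \geq 6F = 6(2 + E/3) = 12 + 2E$, hence $0 \geq 12$, a contradiction. Therefore $w$ must contain a face with $\ell(f) \in \{1,2,4\}$, i.e. a circle, a digon or a square.

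The only subtlety — and the step I expect to need the most care — is making the notion of ``face'' and of the boundary-length $\ell(f)$ precise in the presence of multigraph phenomena: vertex-less loops, digons (two edges between the same pair of vertices bounding a bigon region), faces incident to a bridge or to the same edge on both sides, and the embedding in $\RR^2$ versus $S^2$. I would handle this by working on $S^2$ throughout (so there is no distinguished outer face), by defining $\ell(f)$ via the boundary walk of $f$ so that each edge on the boundary is counted once for each side of it lying on $f$, and by checking the identity $\sum_f \ell(f) = 2E$ directly from this definition. One then observes that a face with $\ell(f) = 1$ is exactly a vertex-less loop (already dispatched) or a loop at a single vertex — but a loop at a vertex is impossible in a bipartite graph — that $\ell(f) = 2$ is a digon, and $\ell(f) = 4$ a square; and that parity of $\ell$ is even by bipartiteness, ruling out $\ell(f) = 3, 5$. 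With these conventions pinned down the computation above goes through verbatim and the proposition follows.
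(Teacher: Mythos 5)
Your proof is correct and takes essentially the same route as the paper: Euler's formula combined with $3$-regularity and the bipartite parity of face lengths, the only cosmetic difference being that the paper packages the count as the identity $\sum_i\bigl(1-\tfrac{i}{6}\bigr)F_i=2$ and reads off that $F_2$ or $F_4$ is positive, while you derive a contradiction from assuming every face has at least six sides. One small point to tidy: Euler's formula $V-E+F=2$ as an equality requires connectedness (the paper first passes to an innermost connected component), though since extra components only increase $F$ your inequality, and hence the contradiction, still goes through.
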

\begin{proof}
Let $w$ be a closed web. One can suppose that the web $w$ is connected for otherwise we consider could an innermost connected component. Suppose that it is not a circle. Then we use that the Euler characteristic  of a connected plane graph is equal to 2:
\[
\chi (w) = \#F(w) - \#E(w) +\#V(w) = 2,
\]
where $F(w)$, $E(w)$ and $V(w)$ are the sets of faces, edges and vertices of $w$.
Each vertex is the end of 3 edges, and each edge has 2 ends, so that $\#V(w)=\frac{2}3 \#E(w)$.
Now $\#F(w) = \sum_{i\in \NN} F_i(w)$ where $F_i(w)$ denotes the number of faces of $w$ with $i$ sides. A web being bipartite, $F_i(w)$ is equal to 0 if $i$ is odd. Each edges belongs to exactly two faces so that $\sum iF_i(w) = 2 \#E(w)$. This gives:
\[
\sum_i F_i(w) -\frac{i}6 F_i(w) =2.
\]
And this shows that at $F_2$ or $F_4$ must be positive.
\end{proof}

We fix $\S$ to be the set of colors $\{\red,\green,\blue\}$. It is endowed with a total order\footnote{This is the frequency order.} on $\S$: $\red<\green<\blue$. For every element $u$ of $\S$, we denote by $\S_{u}$ the set $\S_u\setminus \{u\}$, it is endowed with he order induced by the order of $\S$.

\begin{dfn}
 Let $u$ be a color of  $\mathcal{S}$, a \emph{$\S_u$-colored cycle} $C$ is either an oriented circle embedded in the plane and colored with one of the two colors of $\S_u$, or a connected plane oriented 2-regular $\S_u$-edge-colored graph, such that the vertices are either sinks or sources. 
%The good way to think about the circle is to consider that it is a plane graph with one edge and no vertex. 
By $\S_u$-edge-colored we mean that there is an application $\phi_C$ from $E(C)$ the set of edges of $C$ to $\mathcal{S}$ such that two adjacent edges are colored in a different way. The application $\phi_C$ is called the \emph{coloring}.
\end{dfn}
\begin{dfn}
  If $C$ is a $\S_u$-colored cycle we say that it is \emph{positively oriented} if, when reversing the orientations of the edges which are colored with the greatest element of $\S_u$, the result is a counterclockwise oriented cycle. When a $\S_u$-colored cycle is not positively oriented it is \emph{negatively oriented}.
\end{dfn}
\begin{dfn} Let $u$ be an element of $\S$, 
  a \emph{$\S_u$-colored configuration} $D$ is a finite disjoint union of $\S_u$-colored cycles $(C_i)_{i\in I}$.
\end{dfn}
\begin{dfn}
The \emph{degree} $d(D)$ of an $\S_u$-colored configuration $D$ is the algebraic number of oriented cycles, \ie the number of positively oriented cycles minus the number of the negatively oriented cycles.
\end{dfn}
\begin{lem}\label{lem:Ddegree}
  If $u$ is a color of $\S$, denote by $v$ the smallest color of $\S_u$. Suppose that $D_1$ and $D_2$ are two $\S_u$-colored configurations which are the same except in a small ball where:
\[
D_1 = {\vcenter{\hbox{\!
 \begin{tikzpicture}[scale={0.4},decoration={markings, mark=at
     position 0.5 with {\arrow{>}}},postaction={decorate}]
   \draw[dotted] (1.5,1.5) circle (1.414cm);   
   \draw[postaction= {decorate}] (0.5,0.5) .. controls (1,1) and (2,1) .. (2.5, 0.5);
   \draw[postaction= {decorate}] (2.5,2.5) .. controls (2,2) and (1,2) .. (0.5, 2.5);
\node at (1.5,0.4) {$v$};
\node at (1.5,2.6) {$v$};
 \end{tikzpicture}}}}\qquad \textrm{ and}\qquad
D_2= 
\vcenter{\hbox{\!
 \begin{tikzpicture}[scale={0.4},decoration={markings, mark=at
     position 0.5 with {\arrow{>}}},postaction={decorate}]
   \draw[dotted] (1.5,1.5) circle (1.414cm);   
   \draw[postaction= {decorate}] (0.5,0.5) .. controls (1,1) and (1,2) .. (0.5, 2.5);
   \draw[postaction= {decorate}] (2.5,2.5) .. controls (2,2) and (2,1) .. (2.5, 0.5);
\node at (0.4,1.5) {$v$};
\node at (2.6,1.5) {$v$};
 \end{tikzpicture}}}.
\]
Then we have $d(D_2)-d(D_1)=1$.
\end{lem}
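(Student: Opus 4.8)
Here is the plan.

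\textbf{Step 1 — read the sign of a cycle off a canonical orientation.}
Given $u$, a $\S_u$-colored cycle $C$ carries a canonical orientation, its \emph{reference orientation}: keep every edge colored with the smaller color of $\S_u$ (so with $v$) as it is given, and reverse every edge colored with the larger one. This is a coherent orientation of the underlying circle: at each vertex the two incident edges carry the two distinct colors of $\S_u$ and both point in or both point out, so after the reversal exactly one points in and the other out (a monochromatic circle being the degenerate case). By the very definition of ``positively oriented'', $C$ is positively oriented exactly when its reference orientation is counterclockwise, and negatively oriented exactly when it is clockwise. Note that the reference orientation of an edge depends only on that edge — its color and its given orientation — not on the global shape of the configuration.

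\textbf{Step 2 — interpret $d(D)$ as a total turning number.}
By the theorem of turning tangents (Hopf's \emph{Umlaufsatz}), a smoothly embedded closed curve in the plane has rotation index $+1$ if it is counterclockwise and $-1$ if it is clockwise. Applying this to each component of a $\S_u$-colored configuration $D$, equipped with its reference orientation, and writing $T(D)$ for the total turning of $D$ (the sum over the components of the integral of the signed curvature), one gets
\[
\frac{1}{2\pi}\,T(D)\;=\;\#\{\text{positively oriented components of }D\}-\#\{\text{negatively oriented components of }D\}\;=\;d(D).
\]

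\textbf{Step 3 — localize to the ball and compute.}
Outside the small ball $D_1$ and $D_2$ coincide as colored oriented graphs, hence (by the last remark of Step 1) as reference-oriented curves; moreover the two arcs of $D_1$ and the two arcs of $D_2$ reach the boundary circle of the ball at the same four points and with the same (radial) tangent directions. Consequently $T(D_2)-T(D_1)$ equals the turning of the two arcs of $D_2$ inside the ball minus the turning of the two arcs of $D_1$ inside the ball: the contribution of the part outside the ball is identical, and so is any corner contribution at the boundary circle (its exterior angle is determined by the incoming direction from outside and the outgoing radial direction, both common to $D_1$ and $D_2$). Inside the ball the arcs are $v$-colored, so they carry their drawn orientations. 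For $D_1$ the two arcs are the two horizontal ``caps'', and along each of them, read in the drawn orientation, the unit tangent rotates by $-\pi/2$; hence the inside contributes $-\pi$ to $T(D_1)$. For $D_2$ the two arcs are the two vertical ``sides'', and along each the unit tangent rotates by $+\pi/2$; hence the inside contributes $+\pi$ to $T(D_2)$. Therefore $T(D_2)-T(D_1)=\pi-(-\pi)=2\pi$, and dividing by $2\pi$ gives $d(D_2)-d(D_1)=1$.

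\textbf{Main obstacle.}
The substantive points are the identification of ``positively oriented'' with ``counterclockwise reference orientation'' and the formula $d(D)=\frac{1}{2\pi}T(D)$; once these are in place, the rest is the local bookkeeping of four $\pm\pi/2$ turns together with the observation that the picture outside the ball contributes equally to $T(D_1)$ and $T(D_2)$. The one place where a slip is easy is the signs — in particular, checking that with the drawn orientations the arcs of $D_1$ turn the opposite way from those of $D_2$ — but this is read off directly from the two pictures. A more combinatorial variant would instead split into the cases according to whether the two $v$-strands lie on one or on two components of $D_1$ and track how components merge or split; however, pinning down the signs of the components involved then amounts to essentially the same computation, so the turning-number argument above seems the most economical.
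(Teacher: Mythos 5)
Your proof is correct, but it takes a genuinely different route from the paper. The paper's own argument is combinatorial: it restricts attention to the connected components meeting the ball, observes that one may assume they are monochromatic of color $v$ (your ``reference orientation'' remark is the same normalization), reduces to the case where $D_1$ has a single component through the ball, and then checks the two essentially different pictures by inspection. You instead identify $d(D)$ with $\frac{1}{2\pi}$ times the total turning of the reference-oriented configuration via Hopf's Umlaufsatz and localize the computation of $T(D_2)-T(D_1)$ to the ball, where the four $\pm\pi/2$ turns give $2\pi$. What your approach buys is that no enumeration of cases (one component versus two, and their signs) is needed, and it simultaneously yields the additivity statement the paper only invokes later, in lemma~\ref{lem:degrees_adds}, by exactly this curvature argument; what the paper's approach buys is that it stays purely combinatorial and avoids the analytic input. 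One small point of care in your write-up: since the cycles may have corners at the web vertices, the ``total turning'' $T$ must include the exterior angles at corners for the identity $d(D)=\frac{1}{2\pi}T(D)$ to be exact; this is harmless here because, as you note, all corner contributions outside the ball (and at the boundary of the ball) are common to $D_1$ and $D_2$ and cancel in the difference, but it should be said when stating Step 2.
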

\begin{proof}
  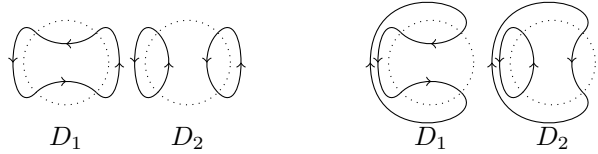
\begin{figure}[!ht]
    \centering
    \begin{tikzpicture}[scale=0.4]
      \begin{scope}[scale={1},decoration={markings, mark=at
     position 0.5 with {\arrow{>}}},postaction={decorate}]
      \draw[postaction= {decorate}] (0.5,0.5) .. controls (1,1) and (2,1) .. (2.5, 0.5);
      \draw[postaction= {decorate}] (2.5,2.5) .. controls (2,2) and (1,2) .. (0.5, 2.5);
      \draw[postaction= {decorate}] (0.5,2.5) .. controls +(-1,1) and +(-1,-1) .. (0.5, 0.5);
      \draw[postaction= {decorate}] (2.5,0.5) .. controls +(1,-1) and +(1,1) .. (2.5, 2.5);
\draw[dotted] (1.5,1.5) circle (1.4);
\node at (1.5, -1) {$D_1$}; 
\end{scope}
 \begin{scope}[xshift = 12cm, scale={1},decoration={markings, mark=at
     position 0.5 with {\arrow{>}}},postaction={decorate}]
      \draw[postaction= {decorate}] (0.5,0.5) .. controls (1,1) and (2,1) .. (2.5, 0.5);
      \draw[postaction= {decorate}] (2.5,2.5) .. controls (2,2) and (1,2) .. (0.5, 2.5);
      \draw[postaction= {decorate}] (0.5,2.5) .. controls +(-1,1) and +(-1,-1) .. (0.5, 0.5);
      \draw[postaction= {decorate}] (2.5,0.5) .. controls +(1,-1) and +(0,-3).. (-0.5,1.5)..   controls +(0,3) and +(1,1) .. (2.5, 2.5);
\draw[dotted] (1.5,1.5) circle (1.4); 
\node at (1.5, -1) {$D_1$};
%\node at (2.6,1.5) {$v$};
 \end{scope}.
\begin{scope}[xshift= 4cm,scale={1},decoration={markings, mark=at
     position 0.5 with {\arrow{>}}},postaction={decorate}]
      \draw[postaction= {decorate}] (0.5,0.5) .. controls (1,1) and (1,2) .. (0.5, 2.5);
      \draw[postaction= {decorate}] (2.5,2.5) .. controls (2,2) and (2,1) .. (2.5, 0.5);
      \draw[postaction= {decorate}] (0.5,2.5) .. controls +(-1,1) and +(-1,-1) .. (0.5, 0.5);
      \draw[postaction= {decorate}] (2.5,0.5) .. controls +(1,-1) and +(1,1) .. (2.5, 2.5);
\node at (1.5, -1) {$D_2$};
\draw[dotted] (1.5,1.5) circle (1.4); 
\end{scope}
 \begin{scope}[xshift=16cm,scale={1},decoration={markings, mark=at
     position 0.5 with {\arrow{>}}},postaction={decorate}]
      \draw[postaction= {decorate}] (0.5,0.5) .. controls (1,1) and (1,2) .. (0.5, 2.5);
      \draw[postaction= {decorate}] (2.5,2.5) .. controls (2,2) and (2,1) .. (2.5, 0.5);
      \draw[postaction= {decorate}] (0.5,2.5) .. controls +(-1,1) and +(-1,-1) .. (0.5, 0.5);
      \draw[postaction= {decorate}] (2.5,0.5) .. controls +(1,-1) and +(0,-3).. (-0.5,1.5)..   controls +(0,3) and +(1,1) .. (2.5, 2.5);
\draw[dotted] (1.5,1.5) circle (1.4); 
\node at (1.5, -1) {$D_2$};
%\node at (0.4,1.5) {$v$};
%\node at (2.6,1.5) {$v$};
 \end{scope}.
    \end{tikzpicture}
    \caption{There are two different situations. The dotted circles represent the border of the ball where $D_1$ and $D_2$ are different. It's clear that $d(D_2)-d(D_1)=1$ for both cases.}
    \label{fig:D1D2config}
  \end{figure}
  To show this lemma, it's enough to consider only the connected components of $D_1$ and $D_2$ which meet the ball where $D_1$ and $D_2$ are different. We can as well suppose that these connected components are colored with only one color (which is $v$, the lowest color of $\S_u$). Then there are different situations to check, it's enough to inspect the case where $D_1$ has just one connected component (for otherwise $D_2$ has only one, and reversing the orientations we are back in the situation where $D_1$ has only one connected component). There are essentially two possibilities and they are depicted in figure \ref{fig:D1D2config}.
\end{proof}
\begin{dfn}
  If $w$ is a web and $c$ is an $\S$-edge-coloring of $w$ (we mean a function from the set of edges (and circles) of $w$ to $\S$ such that if two edges are adjacent, they have different colors), we denote by $w_c$ the web together with this coloring. If $u$ is a color of $\mathcal{S}$, we define the \emph{$\S_u$-configuration of $w_c$} denoted by $D_u(w_c)$, the $\S_u$-colored configuration obtained from $w_c$ by deleting all the edges (and circles) with color $u$. We denote by $\mathrm{col}(w)$ the set of all $\S$-edge-colorings of $w$.
\end{dfn}
\begin{dfn}
If $w_c$ is a $\S$-colored web, we define $d_t(w_c)$ the \emph{total degree of $w_c$} to be the sum of the $D_u(w_c)$ when $u$ runs over the colors of $\S$. We define $\kupc{w}$ \emph{the colored Kuperberg bracket of $w$} to be the Laurent polynomial in $q$ defined by the following formula:
\[\kupc{w} =\sum_{c\in\mathrm{col}(w)} \prod _{u\in\S}q^{d(D_u(w_c))} = \sum_{c\in\mathrm{col}(w)} q^{d_t(w_c)}.\]
\end{dfn}
\begin{thm} The colored Kuperberg bracket is multiplicative with respect to direct sum and the following local relations hold:
  \begin{align*}
   \kupc{\!\websquare[0.4]} &= \kupc{\!\webtwovert[0.4]} + \kupc{\!\webtwohori[0.4]}, \\
   \kupc{\!\webbigon[0.4]}  &= [2] \cdot \kupc{\!\webvert[0.4]},\\
   \kupc{\!\webcircle[0.4]} &= \kupc{\!\webcirclereverse[0.4]} = [3]
  \end{align*}
where $[n]$ stands for $\frac{q^n - q^{-n}}{q-q^{-1}}$.
\end{thm}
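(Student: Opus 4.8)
The plan is to verify the three local relations by analysing how colorings split near the relevant local pictures, and then assembling the degree contributions coming from Lemma~\ref{lem:Ddegree}. Multiplicativity under direct sum is immediate from the definition: a coloring of $w_1 \sqcup w_2$ is a pair of colorings, the $\S_u$-configurations are disjoint unions, and degrees add, so $q^{d_t}$ factors. So the content is in the three pictures.

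For the circle relation $\kupc{\webcircle[0.4]} = \kupc{\webcirclereverse[0.4]} = [3]$, I would observe that a vertex-less circle has exactly $3$ colorings, one for each color $u \in \S$. For a fixed coloring with color $u$, the configuration $D_v(w_c)$ for $v \neq u$ is empty, while $D_u(w_c)$ is a single monochromatic oriented circle. One then computes its degree: reversing the edges colored with the greatest element of $\S_u$ does nothing (the circle is colored $u \notin \S_u$), so the sign is $+1$ if the circle is counterclockwise and $-1$ if clockwise. Summing $q^{d_t}$ over the three colorings, and noting that the three values of the single nonempty degree are, depending on orientation, respectively balanced so as to give $q + 1 + q^{-1}$ — wait, more carefully: the degree of a monochromatic circle depends only on its planar orientation, not on $u$, so all three colorings give the same power, hence the bracket of one circle is $3q^{\pm1}$, which is not $[3]$. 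I therefore need to recheck: in fact the degree must be read off differently, and the correct bookkeeping is that among the colors $u$, the circle's edge-color interacts with the ordering so that the three colorings contribute $q^{1}, q^{0}, q^{-1}$; this is exactly the point where one must be careful about which color plays the role of ``greatest in $\S_u$''. This sign analysis is the first technical step.

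For the bigon relation $\kupc{\webbigon[0.4]} = [2]\cdot\kupc{\webvert[0.4]}$, I would set up a bijection between colorings of a web containing a bigon and pairs (coloring of the web with the bigon replaced by a single edge, choice of a color for the ``inner'' strand of the bigon distinct from the color of that edge). Given a coloring $c$ of the contracted web in which the merged edge has color $u$, the two ways to color the bigon correspond to the two colors $v, v' \in \S_u$. I then compare $d_t$ of the bigon-coloring with $d_t$ of the contracted coloring: the $\S_u$-configuration is unchanged, but for the two colors $\ne u$ the bigon either contributes a small monochromatic circle or not, changing the degree by a controlled amount, producing the factor $q + q^{-1} = [2]$. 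Again the heart of the matter is checking the orientation/sign of the extra component using Lemma~\ref{lem:Ddegree} and the positivity convention.

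For the square relation $\kupc{\websquare[0.4]} = \kupc{\webtwovert[0.4]} + \kupc{\webtwohori[0.4]}$, I would argue as follows. A coloring of a web containing the square local picture assigns colors to the four external legs and to the four internal edges, subject to the adjacency conditions at the four trivalent vertices; the four external legs necessarily all receive the same color, say $u$ (the two legs at each vertex, together with an internal edge, use all three colors; tracing around shows the external legs are forced to agree), and the internal $4$-cycle is then colored by the two colors of $\S_u$ in one of the two alternating ways. Meanwhile a coloring of $\webtwovert[0.4]$ or $\webtwohori[0.4]$ colors the two strands, and the two configurations correspond precisely to the two resolutions. So colorings of the square side are in bijection with (colorings of $\webtwovert$) $\sqcup$ (colorings of $\webtwohori$) of the ambient web, matching external data. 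It remains to match degrees: deleting color $u$ from the square yields a $\S_u$-configuration that near the ball looks like one of the two pictures $D_1, D_2$ of Lemma~\ref{lem:Ddegree}, and deleting $u$ from $\webtwovert$ resp.\ $\webtwohori$ yields a configuration differing from it exactly by the local move of that lemma; the lemma says this changes $d(D_u)$ by $\pm 1$, which is absorbed by the fact that one also adds or removes the short arc colored $u$ elsewhere. The upshot is $q^{d_t}$ on the square side equals $q^{d_t}$ on the corresponding term, with no extra factor, so the sum works out.

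The main obstacle I expect is the consistent sign/degree bookkeeping: verifying in each of the three cases that the degrees of the $\S_u$-configurations on the two sides differ by exactly the amount predicted by Lemma~\ref{lem:Ddegree} and the ``positively oriented'' convention, and in particular that the local orientations can always be chosen so the relevant component is a clean circle to which the lemma applies. Once the bijections on the level of colorings are set up (which is elementary finite case-checking at the trivalent vertices), the degree-matching is the only place where something could go wrong, and it is essentially a careful application of Lemma~\ref{lem:Ddegree} together with the earlier definitions; I would present it as a single lemma handling ``adding a monochromatic arc/circle'' uniformly and then invoke it three times.
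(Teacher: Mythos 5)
Your overall strategy (explicit bijections between colorings of the two sides plus degree bookkeeping via Lemma~\ref{lem:Ddegree}) is the same as the paper's, but the local analyses contain concrete errors. For the circle: you have the configurations backwards. For a circle colored $u$, it is $D_u(w_c)$ that is empty (deleting the $u$-colored circle removes everything), while $D_v(w_c)$ for each of the two colors $v\neq u$ still contains the circle; each of those two components contributes $\pm 1$, so the three colorings have total degrees $2,0,-2$ and the bracket is $q^2+1+q^{-2}=[3]$. Your proposed fix, contributions $q^{1},q^{0},q^{-1}$, sums to $q+1+q^{-1}$, which is \emph{not} $[3]$ for $[n]=\frac{q^n-q^{-n}}{q-q^{-1}}$, so this case is left genuinely unresolved in your write-up. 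The same inversion infects the bigon: it is the configuration $D_u$, for $u$ the color of the ambient strand, that gains a small closed cycle (the bigon itself, bi-colored by the two colors of $\S_u$, not monochromatic), while the configurations for the two colors different from $u$ are unchanged, the bigon there being replaced by a path isotopic to the original strand. If, as you write, both colors $\neq u$ gained a circle, the total degree shifts would be $0$ or $\pm 2$ and you could not produce the factor $q+q^{-1}=[2]$.

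For the square, your structural claim that the four external legs must all receive the same color is false: color the two bottom legs red, the two top legs green, the bottom square edge green, the top square edge red and the two vertical square edges blue; every vertex then sees three distinct colors. Such colorings with non-matching legs make up a large part of $\mathrm{col}(w)$ and are exactly what makes the relation balance: they correspond bijectively, with \emph{zero} degree shift, to the colorings of one smoothing in which the two strands carry different colors, while the all-legs-equal colorings biject with the same-colored-strand colorings of the two smoothings; only in that second family does Lemma~\ref{lem:Ddegree} enter, and the $\pm 1$ it produces is cancelled by an opposite $\mp 1$ coming from the configuration that gains the bi-colored square cycle, not by ``a short arc colored $u$ elsewhere''. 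As written, your bijection omits the different-colored family and your degree matching is not pinned down, so the square identity is not established; this is precisely what the paper's maps $\phi'$ and $\phi''$ and their tabulated degree differences are designed to handle.
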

Note that thanks to proposition~\ref{prop:closed2elliptic}, this theorem gives an explicit way to compute $\kupc{\cdot}$. We divide the proof into three lemmas corresponding to the three given relations.

  \begin{lem}\label{lem:circle}
The colored Kuperberg bracket is multiplicative with respect to disjoint union and we have:
\[\kupc{\!\webcircle[0.3]}=\kupc{\!\webcirclereverse[0.3]}=[3].\]
  \end{lem}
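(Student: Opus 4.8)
The multiplicativity with respect to disjoint union is immediate from the definitions: a coloring of $w_1 \sqcup w_2$ is the same data as a pair of colorings $(c_1, c_2)$ of $w_1$ and $w_2$ separately, and since the two webs lie in disjoint regions of the plane, each $\S_u$-colored configuration $D_u((w_1\sqcup w_2)_c)$ is the disjoint union of $D_u((w_1)_{c_1})$ and $D_u((w_2)_{c_2})$; hence $d(D_u)$ (being an algebraic count of cycles) is additive, so $d_t$ is additive and $q^{d_t}$ is multiplicative. Summing over all colorings turns this into the product formula $\kupc{w_1 \sqcup w_2} = \kupc{w_1}\cdot\kupc{w_2}$. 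I would state this in one or two sentences.

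For the circle itself: a single oriented circle $\webcircle$ has exactly one edge (a vertex-less loop), and there are $3$ ways to color it, one per color of $\S$. The plan is to compute $d_t(w_c)$ for each of these three colorings. Fix a color $u$; deleting all edges colored $u$ from $w_c$ gives $D_u(w_c)$, which is either a single $\S_u$-colored circle (if the circle is colored with one of the two colors of $\S_u$) or empty (if the circle is colored $u$). An empty configuration has degree $0$. A single monochromatic circle colored by some $a \in \S_u$: by the definition of positive orientation, if $a$ is the \emph{smaller} color of $\S_u$ we do not reverse anything and the circle contributes $+1$ if drawn counterclockwise and $-1$ if clockwise; if $a$ is the \emph{larger} color of $\S_u$ we reverse it first. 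So, writing $\S = \{\red < \green < \blue\}$ and letting $\epsilon = \pm 1$ according to whether the drawn circle is counterclockwise or clockwise, one works out for each of the three colorings the three summands $d(D_{\red}), d(D_{\green}), d(D_{\blue})$ and adds them. I expect to find $d_t = 2\epsilon,\ 0,\ -2\epsilon$ for the three colorings (the color that is "extremal" in $\S$ contributes with aligned signs in the two relevant $\S_u$, the middle color cancels), so that
\[
\kupc{\!\webcircle[0.3]} = q^{2\epsilon} + q^0 + q^{-2\epsilon} = q^2 + 1 + q^{-2} = \frac{q^3 - q^{-3}}{q - q^{-1}} = [3],
\]
and the same for the reversed circle $\webcirclereverse$ since flipping the ambient orientation just sends $\epsilon \mapsto -\epsilon$, which permutes the three summands.

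The only subtle point — and the part I'd be most careful about — is getting the sign bookkeeping in the definition of "positively oriented" exactly right, i.e.\ tracking which color of $\S_u$ is the greatest and hence gets its orientation reversed, for each of the three choices of $u$ and each of the three colorings of the loop. This is a finite check over $3 \times 3$ cases but it is where an off-by-a-sign error would creep in; everything else (multiplicativity, counting the three colorings, recognizing $q^2 + 1 + q^{-2} = [3]$) is routine.
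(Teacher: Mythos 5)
Your proposal is correct and takes essentially the same route as the paper: multiplicativity is read off directly from the definitions, and the circle is handled by listing its three colorings and computing the total degree of each, which gives $2,\,0,\,-2$ for the counterclockwise circle (and the opposite signs for the clockwise one), hence $q^{2}+1+q^{-2}=[3]$ in both cases. The paper simply records this finite sign check in a table, so the only step you deferred --- the case-by-case bookkeeping of which color of $\mathbb{S}_u$ gets reversed --- indeed confirms the values you predicted.
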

\begin{proof}
The first point is obvious from the definitions. The second point is just a computation. Suppose that the web $w$ is the circle counterclockwise oriented, then there are three different $\S$-edge-colorings for $w$: the circle is either red, green or blue. The details of the computation is given in the table~\ref{tab:degre-circle}. And it's clear from this computations that $\kupc{w}=[3]$. The computation is similar for a clockwise oriented circle.
\begin{table}[ht!]
  \centering
  \begin{tabular}[h]{|c||c|c|c|}\hline
   & $w_{\red}$&$w_{\green}$&$w_{\blue}$\\ \hline \hline
  $D_{\red}(\cdot)$ &   0   &  -1  &  -1   \\ \hline
 $D_{\green}(\cdot)$ &  1    & 0  &  -1   \\ \hline
 $D_{\blue}(\cdot)$ &   1   &  1  &  0  \\ \hline \hline
$d_t(\cdot)$              &   2   &  0  &  -2    \\ \hline
 \end{tabular}
  \caption{Degrees of the colorings of the counterclockwisely oriented circle.}
  \label{tab:degre-circle}
\end{table}
\end{proof}
\begin{lem}\label{lem:digon}
  The colored Kuperberg bracket satisfies the following formula:
\[
\kupc{\!\webbigon[0.5]}=[2]\kupc{\!\webvert[0.5]}.
\]
\end{lem}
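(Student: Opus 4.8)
The plan is to set up a $2$-to-$1$ correspondence between the colorings of the web $w$ containing the bigon $\webbigon$ and the colorings of the web $w'$ obtained by replacing it with the single strand $\webvert$, and then to track how the total degree changes strand-by-strand. First I would observe that in any $\S$-edge-coloring of $w$ the strand entering the bigon from below and the strand leaving from above carry the same colour, say $a$: at the bottom vertex (a sink) the three incident edges use all three colours, so the two arcs of the bigon carry the two colours of $\S_a=\S\setminus\{a\}$, and then at the top vertex (a source) the outgoing strand must avoid both of these, forcing colour $a$. Conversely a coloring $c'$ of $w'$ whose distinguished edge has colour $a$ extends to $w$ in exactly two ways, according to which arc receives the smaller and which the larger colour of $\S_a$; call these $c_1$ and $c_2$. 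This exhibits $\col(w)$ as the disjoint union, over $c'\in\col(w')$, of the $2$-element sets $\{c_1,c_2\}$.

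Next I would compare $d_t(w_{c_i})$ with $d_t(w'_{c'})$ colour by colour. For $u\neq a$ (so $u$ is one of the two arc colours), forming $D_u$ deletes exactly one of the two arcs, so inside the ball $D_u(w_{c_i})$ is a single arc joining the two bigon vertices, flanked by two edges of colour $a$; after reversing the edges whose colour is the largest element of $\S_u$ --- precisely the operation used to test positive orientation --- this local piece becomes a consistently directed strand, isotopic within the ball to the single $a$-coloured strand of $D_u(w'_{c'})$, and the side on which the surviving arc bulges is immaterial up to isotopy. Hence $d(D_u(w_{c_i}))=d(D_u(w'_{c'}))$ for both $i$ and both such $u$. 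For $u=a$, deleting the $a$-coloured edges peels the two arcs off as their own closed $\S_a$-colored cycle (a bigon cycle), while the rest of $D_a(w_{c_i})$ coincides with $D_a(w'_{c'})$; thus $d(D_a(w_{c_i}))=d(D_a(w'_{c'}))+\epsilon_i$ where $\epsilon_i$ is the degree of that bigon cycle. A direct inspection, in the spirit of Lemma~\ref{lem:Ddegree} but simpler, shows that this bigon cycle is positively oriented for one of $c_1,c_2$ and negatively oriented for the other, since swapping the colours on the two arcs swaps which arc gets reversed in the positivity test and thereby interchanges the clockwise and counterclockwise outcomes; so $\{\epsilon_1,\epsilon_2\}=\{+1,-1\}$.

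Combining, $d_t(w_{c_i})=d_t(w'_{c'})+\epsilon_i$ with $\{\epsilon_1,\epsilon_2\}=\{1,-1\}$, hence $q^{d_t(w_{c_1})}+q^{d_t(w_{c_2})}=(q+q^{-1})\,q^{d_t(w'_{c'})}=[2]\,q^{d_t(w'_{c'})}$, and summing over $c'$ gives
\[
\kupc{w}=\sum_{c'\in\col(w')}\bigl(q^{d_t(w'_{c'})+1}+q^{d_t(w'_{c'})-1}\bigr)=(q+q^{-1})\sum_{c'\in\col(w')}q^{d_t(w'_{c'})}=[2]\,\kupc{w'},
\]
which is the desired relation. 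I expect the only real subtlety --- the hard part --- to be the orientation bookkeeping in the second paragraph: one must be sure that contracting the local $a,u,a$ path to a single $a$-strand does not change the cyclic orientation (clockwise versus counterclockwise) of the global cycle it sits on, which is exactly where the colour-dependent edge reversals are needed so that the two local pictures become planar-isotopic, and that the left/right placement of the surviving arc plays no role. Degenerate cases (for instance when the two feet of the bigon are joined directly, so that $w'$ has a circle where $w$ has a bigon sitting on a circle, cf.\ Lemma~\ref{lem:circle}) are covered verbatim, since every claim above is local and refers only to the portion of $D_u$ inside the ball.
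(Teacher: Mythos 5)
Your proposal is correct and follows essentially the same route as the paper: your two extensions $c_1,c_2$ of a coloring of $w'$ are exactly the paper's maps $\phi_+$ and $\phi_-$, whose images partition $\col(w)$ and which shift the total degree by $+1$ and $-1$. The only difference is presentational — the paper verifies the degree shifts by an explicit table, while you argue them by the local orientation/isotopy analysis (correctly, including the sign of the peeled-off bigon cycle in the colour $u=a$).
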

\begin{proof}
  We denote by $w$ the web with the digon and $w'$ the web with this digon  replaced by a single strand. The only thing to do is to exhibit two functions $\phi_+$ and $\phi_-$ from $\mathrm{col}(w')$ to $\mathrm{col}(w)$ which are injective, whose images constitute a partition of $\mathrm{col}(w)$ and such that $\phi_+$ increases the total degree of colorings by one and $\phi_-$ decreases the total degree of colorings by one. The functions $\phi_+$ and $\phi_-$ are explicitly described in table \ref{tab:function-phis}.
\begin{table}[ht]
    \centering
    \begin{tabular}[h]{|c||c|c|c|}\hline
      $w'_c$ &  $\cwebvert{red}$   &  $\cwebvert{green}$   &  $\cwebvert{blue}$    \\ \hline\hline
      $w_{\phi_{+}(c)}$ &  $\cwebbigon{red}{green}{blue}$  & $\cwebbigon{green}{red}{blue}$ & $\cwebbigon{blue}{red}{green}$ \\ \hline
      $\Delta^+_\red $& $+1 $ & $ 0$ & $0 $ \\ \hline
      $\Delta^+_\green $& $ 0$ & $ +1 $ & $0$\\ \hline 
      $\Delta^+_\blue $& $ 0$ & $0 $ & $+1 $ \\ \hline \hline    
      $w_{\phi_{-}(c)}$ &   $\cwebbigon{red}{blue}{green}$  & $\cwebbigon{green}{blue}{red}$ & $\cwebbigon{blue}{green}{red}$ \\ \hline
      $\Delta^-_\red $& $-1 $ & $ 0$ & $0 $ \\ \hline
      $\Delta^-_\green $& $ 0$ & $ -1 $ & $0$\\ \hline 
      $\Delta^-_\blue $& $ 0$ & $0 $ & $-1 $ \\ \hline    
    \end{tabular}
    \caption{The functions $\phi_+$ and $\phi_-$. The drawn part is the only non-canonical part, where $\Delta^+_*$ stands for $d(D_*(w_{\phi_+(c)}))-d(D_*(w'_c))$ and $\Delta^-_*$ for $d(D_*(w_{\phi_-(c)}))-d(D_*(w'_c))$. }
    \label{tab:function-phis}
  \end{table}
The injectivity of the maps $\phi_+$ and $\phi_-$ is obvious from their definitions, the fact that their images form a partition of $\mathrm{col}(w)$ as well. The only thing to check is the degree conditions. It's an easy computation, the details are given in table \ref{tab:function-phis}.
\end{proof}
\begin{lem}\label{lem:square}
 The colored Kuperberg bracket satisfies the following formula:
\[
\kupc{\!\websquare[0.5]}=\kupc{\!\webtwohori[0.5]} +\kupc{\!\webtwovert[0.5]}.
\]
\end{lem}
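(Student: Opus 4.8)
The plan is to prove the square relation by the same scheme as Lemmas~\ref{lem:circle} and~\ref{lem:digon}: build an explicit degree-preserving bijection. Write $w$ for the web containing the distinguished square, and $w'$, $w''$ for the two webs obtained from $w$ by the two planar resolutions of that square (replacing $\websquare[0.3]$ by $\webtwohori[0.3]$, respectively by $\webtwovert[0.3]$). I want a bijection $\Phi\colon \mathrm{col}(w)\to\mathrm{col}(w')\sqcup\mathrm{col}(w'')$ which preserves the total degree of colorings; granting this, the identity $\kupc{w}=\kupc{w'}+\kupc{w''}$ is immediate from the definition of $\kupc{\cdot}$.

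First I would describe the colorings locally. Label the square's vertices $v_1,\dots,v_4$ cyclically, its four boundary edges $e_1,\dots,e_4$, and its four outer legs $f_1,\dots,f_4$. The cyclic sequence $(e_1,e_2,e_3,e_4)$ has consecutive entries distinct, and each $f_i$ is then forced: it is the color of $\S$ not carried by the two square-edges at $v_i$. One checks there are exactly two cases. Either exactly one of the two pairs of opposite edges is monochromatic (a \emph{generic} coloring), and then the legs satisfy $f_1=f_2\neq f_3=f_4$ or $f_1=f_4\neq f_2=f_3$; or both pairs are monochromatic, say with colors $\{a,b\}$, and then all four legs carry the remaining color $c$ (a \emph{special} coloring). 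A generic coloring agrees, outside the square, with a coloring of exactly one of $w'$, $w''$ --- the one over which its restriction extends across the two new arcs, necessarily uniquely, with the arcs receiving distinct colors --- and this is a bijection between generic colorings of $w$ and colorings of $w'$ and of $w''$ whose two new arcs have distinct colors. The special colorings occur in pairs with the same restriction outside the square (the two proper $2$-colorings of the square), and these pairs must be matched with the colorings of $w'$ and $w''$ --- one of each per pair --- in which both new arcs carry the common leg-color.

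Degree preservation on generic colorings should be routine: for each color $u\in\S$ the configuration $D_u(w_c)$ seen inside a ball around the square is an arc, or a pair of arcs, with some legs capped off, and it has the same endpoints on the ball, with the same orientations there, as $D_u$ of the corresponding resolved coloring. Since the degree of a configuration depends only on the orientation of each of its cycles, hence only on the isotopy class of each cycle relative to the exterior of the ball, these contribute equally to $d_t$; so it suffices to inspect the (few) generic local colorings.

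The special colorings carry the real content, and this is \textbf{the main difficulty}. For such a coloring the configuration $D_c(w_c)$ acquires an \emph{isolated square}: the $4$-cycle $e_1e_2e_3e_4$, alternately colored $a$ and $b$, which is absent from both resolved colorings (there the two new arcs, colored $c$, have been erased). Reversing the edges of this $4$-cycle colored with the larger of $a,b$ produces a consistently oriented simple closed curve, and --- crucially --- since the vertices $v_1,\dots,v_4$ alternate between sinks and sources around the square, this curve is counterclockwise for one of the two special colorings in a pair and clockwise for the other; so the $D_c$-contribution changes by $+1$ for one and by $-1$ for the other. For $u\in\{a,b\}$, on the other hand, $D_u$ of a special coloring agrees in connectivity with $D_u$ of one of the two resolved colorings and differs from $D_u$ of the other by a single reconnection move, which changes the degree by $\pm1$ by Lemma~\ref{lem:Ddegree} (after replacing the arcs near the ball by monochromatic ones with the same endpoints and orientations, which does not change the degree). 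Summing the $\pm1$'s over the three colors, the change in $d_t$ under $\Phi$ is the sign of the isolated square plus the sign of one reconnection move; hence it vanishes for exactly one of the two ways of assigning a pair of special colorings to $\{w',w''\}$. That assignment --- pinned down by the total order on $\S$ via Lemma~\ref{lem:Ddegree} --- defines $\Phi$ on special colorings, and the proof is complete.
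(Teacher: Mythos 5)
Your scheme is the same as the paper's: split the colorings of $w$ into those giving distinct colors to the two strands of a resolution (your generic ones) and the pairs of square colorings with all four legs of one color (your special ones), treat the first case canonically, and handle the second via the isolated bi-colored square together with one reconnection governed by Lemma~\ref{lem:Ddegree}. The generic half of your argument is fine. The gap is the final step for the special colorings: from ``the change in $d_t$ is the sign of the isolated square plus the sign of one reconnection move'' you conclude that it ``vanishes for exactly one of the two ways of assigning a pair of special colorings to $\{w',w''\}$''. That does not follow from what you wrote. Each of the four relevant comparisons (two special colorings of $w$ against the equal-color colorings of $w'$ and of $w''$) is a sum of two signs, and nothing in your argument excludes the unfavourable case where, for a given leg color, both reconnection signs agree with the sign of the square, so that all four comparisons give $\pm 2$ and neither assignment is degree-preserving. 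Whether the favourable cancellation occurs depends on concrete data: the orientation of the bi-colored square is fixed by the alternating sink/source pattern and by which of its two colors is the larger, while the reconnection sign is fixed, via Lemma~\ref{lem:Ddegree} \emph{and its orientation-reversed variant} (the lemma is stated for one orientation pattern of the two strands; the opposite pattern gives the opposite sign), by the effective orientations of the four legs after reversing the maximal color of $\S_u$ --- and these depend on whether the leg color is maximal in $\S_u$. So the cancellation must be computed, not inferred; this finite check is exactly the content of the paper's tables~\ref{tab:square-same-color1} and~\ref{tab:square-same-color2}, where each column of degree changes sums to zero.

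A symptom that the check was skipped: your claim ``exactly one'' is in fact false --- when the legs carry the middle color $\green$, \emph{both} assignments preserve the total degree; only ``at least one works'' holds in general, and that is precisely the statement requiring the sign computation. A minor related point: ``replacing the arcs near the ball by monochromatic ones with the same endpoints and orientations'' must be understood with respect to the effective orientations obtained after the maximal-color reversal, otherwise the replacement can change the degree.
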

\begin{proof}
We denote by $w$ the web with a square figure, $w'$ the web where the square is replaced by two horizontal strands and $w''$ the web where the square is replaced by two vertical strands. It's enough to describe two injective maps $\phi'$ and $\phi''$ from respectively $\mathrm{col}(w')$ and $\mathrm{col}(w'')$ to $\mathrm{col}(w)$ which preserve the total degree of the colorings and such that their images form a partition of $\mathrm{col}(w)$. We describe them in the tables \ref{tab:square-same-color1}, \ref{tab:square-same-color2}, \ref{tab:square-diff-color2} and \ref{tab:square-diff-color1} depending on if the two strands have the same color or not. In tables \ref{tab:square-diff-color2} and \ref{tab:square-diff-color1}, the missing cases are obtained by a 180 degree rotation. 
\begin{table}[ht!]
  \centering
  \begin{tabular}[h]{|c||c|c|c|} \hline
    $w'_c$ & $\cwebtwohori{red}{red} $ & $\cwebtwohori{green}{green} $ & $\cwebtwohori{blue}{blue} $ \\ \hline
    $w_{\phi'(c)}$ & $\scwebsquare{red}{blue}{green}$ & $\scwebsquare{green}{blue}{red} $ & $ \scwebsquare{blue}{red}{green} $ \\ \hline
    $\Delta'_\red $& $-1 $ & $ \underline{+1}$ & $0 $ \\ \hline
    $\Delta'_\green $& $ \underline{+1}$ & $ -1$ & $-1 $ \\ \hline
    $\Delta'_\blue$& $ 0$ & $0 $ & $\underline{+1} $ \\ \hline
  \end{tabular}
  \caption{Description of the map $\phi'$ for the colorings which give the same color to the two horizontal strands of the web $w'$, where $\Delta'_*$ stands for $d(D_*(w_{\phi'(c)}))-d(D_*(w'_c))$. To compute the underlined values we use lemma~\ref{lem:Ddegree}}
  \label{tab:square-same-color1}
\end{table}
\begin{table}[ht!]
  \centering
  \begin{tabular}[h]{|c||c|c|c|} \hline
    $w''_c$ & $\cwebtwovert{red}{red} $ & $\cwebtwovert{green}{green} $ & $\cwebtwovert{blue}{blue} $ \\ \hline
    $w_{\phi''(c)}$ & $\scwebsquare{red}{green}{blue}$ & $\scwebsquare{green}{red}{blue} $ & $ \scwebsquare{blue}{green}{red} $ \\ \hline
    $\Delta''_\red $& $+1 $ & $ \underline{-1}$ & $0 $ \\ \hline                $\Delta''_\green $& $ \underline{-1}$ & $ +1 $ & $+1$\\ \hline
    $\Delta''_\blue $& $ 0$ & $0 $ & $\underline{-1} $ \\ \hline
  \end{tabular}
  \caption{Description of the map $\phi''$ for the colorings which give the same color to the two vertical strands of the web $w''$, where $\Delta''_*$ stands for $d(D_*(w_{\phi''(c)}))-d(D_*(w''_c))$. To compute the underlined values we use lemma~\ref{lem:Ddegree}}
  \label{tab:square-same-color2}
\end{table}
\begin{table}[ht!]
  \centering
  \begin{tabular}[h]{|c||c|c|c|} \hline
    $w'_c$ & $\cwebtwohori{red}{green} $ & $\cwebtwohori{green}{blue} $ & $\cwebtwohori{red}{blue} $ \\ \hline
    $w_{\phi'(c)}$ & $\dchwebsquare{red}{green}{blue}$ & $\dchwebsquare{green}{blue}{red} $ & $ \dchwebsquare{red}{blue}{green} $ \\ \hline
    $\Delta''_\red $& $0 $ & $ 0$ & $0 $ \\ \hline
    $\Delta''_\green $& $ 0$ & $ 0 $ & $0$\\ \hline 
    $\Delta''_\blue $& $ 0$ & $0 $ & $0 $ \\ \hline
  \end{tabular}
  \caption{Description of the map $\phi'$ for the colorings which give different colors to the two horizontal strands of the web $w'$, where $\Delta'_*$ stands for $d(D_*(w_{\phi'(c)}))-d(D_*(w'_c))$.}
  \label{tab:square-diff-color2}
\end{table}
\begin{table}[ht!]
  \centering
  \begin{tabular}[h]{|c||c|c|c|} \hline
    $w''_c$ & $\cwebtwovert{red}{green} $ & $\cwebtwovert{green}{blue} $ & $\cwebtwovert{red}{blue} $ \\ \hline
    $w_{\phi''(c)}$ & $\dcvwebsquare{red}{green}{blue}$ & $\dcvwebsquare{green}{blue}{red} $ & $ \dcvwebsquare{red}{blue}{green} $ \\ \hline
    $\Delta''_\red $& $0 $ & $ 0$ & $0 $ \\ \hline
    $\Delta''_\green $& $ 0$ & $ 0 $ & $0$\\ \hline
    $\Delta''_\blue         $& $ 0$ & $0 $ & $0 $ \\ \hline
  \end{tabular}
  \caption{Description of the map $\phi''$ for the colorings which give different colors to the two horizontal strands of the web $w''$, where $\Delta''_*$ stands for $d(D_*(w_{\phi''(c)}))-d(D_*(w''_c))$.}
  \label{tab:square-diff-color1}
\end{table}

The injectivity of the maps $\phi'$ and $\phi''$ is clear. The fact that their images form a partition of $\mathrm{col}(w)$ is as well straightforward. 
For every coloring the sum of the three values of $\Delta'$ or $\Delta''$ is equal to zero, and this gives the homogeneity of $\phi'$ and $\phi''$.
\end{proof}

\begin{req}
  One could try to use this new approach of the Kuperberg bracket to have a new approach to the $\sll_3$-homology for links: one can associate to a web the graded vector space with basis the coloring. However, the way to define the differential corresponding to the crossing is not clear. The main obstruction comes from the fact that the degree of a morphism can not be understood locally. A better understanding on how the orientations and the colors interact on the degrees may help. One can define a cheap version of the $\sll_3$-homology which is not quantified and hence categorify only the number of color of a web. It turns out that this invariant does not give any information on the links.
\end{req}

\section{In a web, all colorings are Kempe-equivalent}
\label{sec:web-all-colorings}

% Given a web $w$, one can ask how to go from one coloring to another one. One can of course let $\mathfrak{S}_3$ act on the colorings via interchanging the colors, and for example when exchanging $\red$ and $\blue$, we find a coloring with opposite degree. However this doesn't change the structure of the coloring. In this section we propose a semi-local move on colorings and we show that this move is enough to connect any two colorings of $w$ (see theorem~\ref{thm:coltaueq}).

\begin{dfn}
  % Let $u$ be a color of $\S$. Let $w_c$ be a closed colored web, we say that a simple cycle\footnote{The term ``cycle'' includes vertex-less loops.} $C$ of edges of $w_c$ is \emph{$\S_u$-bi-colored}, if the edges of $C$ have colors in $\S_u$. When we don't want to focus on the colors we say that it's \emph{bi-colored}. A $\S_u$-bi-colored cycle is \emph{positively} or \emph{negatively oriented} if its image in $D_u(w_c)$ is respectively positively or negatively oriented.
Let $w$ be a web, u a color of $\S$ and $C$ is a $\S_u$-bi-colored cycle in $w$. We define the coloring $c'= \tau_C(c)$ to be the coloring of $w$ which is the same as $c$ on all edges which doesn't belong to $C$ and which exchanges the colors of $\S_u$ on the edges in $C$. It's immediate that the cycle $C$ remains $\S_u$-colored in $w_{\tau_C(c)}$  and that we have $c=\tau_C(\tau_C(c))$. 
\end{dfn}
\begin{req}
  The $\tau$-change is of course the Kempe change or K-change in the line graph. 
\end{req}
The following proposition states that the grading behaves well via under $\tau$ providing $\tau$ exchange two consecutive colors.
\begin{prop}\label{prop:pm2}
Let $w_c$ be a closed colored web, $u$ a color of $\S$ different from $\green$ and $C$ a $\S_u$-bi-colored cycle in $w_c$. Then we have 
\[d_t(w_{\tau_C(c)})=
\begin{cases}
d_t(w_c) - 2 & \textrm{if $C$ is positively oriented,}\\
d_t(w_c)+ 2& \textrm{if $C$ is positively oriented.}  
\end{cases}
\]
\end{prop}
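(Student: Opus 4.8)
The plan is to rewrite $d_t(w_c)$ as a sum, over the vertices of $w$, of purely local turning angles, and then to see how this sum changes under $\tau_C$. Since $w$ is $3$-edge-colored, at every vertex $p$ exactly one edge of each color meets $p$, and since $p$ is a sink or a source, $p$ is a $2$-valent vertex of each of the three configurations $D_\red(w_c)$, $D_\green(w_c)$, $D_\blue(w_c)$. Each $D_v(w_c)$, once the edges colored with the larger element of $\S_v$ are reversed (the operation used to define positivity), is a disjoint union of oriented simple closed curves; by the Umlaufsatz such a curve has rotation index $+1$ when counterclockwise and $-1$ when clockwise, so the rotation index of $D_v(w_c)$ equals $d(D_v(w_c))$. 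Discarding the vertex-less loop components of $w$ (which $\tau_C$ leaves untouched) and applying an isotopy of $w$ — which changes no $d(\cdot)$, since the orientation of a simple closed curve is an isotopy invariant — we may assume every edge of $w$ has total turning $0$; then
\[
d\bigl(D_v(w_c)\bigr)=\frac{1}{2\pi}\sum_{p\in V(w)}\theta^v_p ,
\]
where $\theta^v_p$ is the turning angle of $D_v(w_c)$ at $p$, computed with the reversing convention attached to $\S_v$. The key point is that $\theta^v_p$ depends only on local data at $p$: whether $p$ is a sink or a source, the cyclic order and tangent directions of the three edges through $p$, and their colors. Summing over $v\in\S$,
\[
2\pi\, d_t(w_c)=\sum_{p\in V(w)}\Theta_p ,\qquad \Theta_p:=\sum_{v\in\S}\theta^v_p .
\]

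The webs $w_c$ and $w_{\tau_C(c)}$ share the same underlying $w$, and $\tau_C(c)$ differs from $c$ only on the edges of $C$; hence $\Theta_p$ is unchanged at every $p\notin V(C)$, and
\[
2\pi\bigl(d_t(w_{\tau_C(c)})-d_t(w_c)\bigr)=\sum_{p\in V(C)}\bigl(\Theta'_p-\Theta_p\bigr),
\]
primes denoting the quantities computed in $w_{\tau_C(c)}$. Fix $p\in V(C)$: the two edges of $C$ at $p$ carry the two colors of $\S_u$, and the remaining edge carries the color $u$. Writing the three angles $\theta^v_p$ for $c$ and their three analogues for $\tau_C(c)$ in terms of the outward unit tangents of the three edges at $p$, and treating the sink case and the source case separately, one finds that almost everything cancels and that
\[
\Theta'_p-\Theta_p=-2\psi_p ,
\]
where $\psi_p$ is the turning angle at $p$ of the cycle $C$ traversed with the edges of the larger color of $\S_u$ reversed — that is, of exactly the traversal used to decide the orientation of $C$. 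This is where $u\neq\green$ is essential: when $u=\green$ the leftover does not reduce to the turning of a single closed traversal of $C$, so the formula fails, which is why the proposition excludes that color.

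It remains to apply the Umlaufsatz to $C$ itself: traversed with the edges of the larger color of $\S_u$ reversed it is a counterclockwise simple closed curve if $C$ is positively oriented and a clockwise one otherwise, so $\sum_{p\in V(C)}\psi_p=2\pi\,d(C)$, where $d(C)=+1$ or $-1$ accordingly. Substituting into the previous display,
\[
2\pi\bigl(d_t(w_{\tau_C(c)})-d_t(w_c)\bigr)=-2\sum_{p\in V(C)}\psi_p=-4\pi\,d(C),
\]
so $d_t(w_{\tau_C(c)})=d_t(w_c)-2\,d(C)$, which is the claim (reading ``positively'' as ``negatively'' in the second case).

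The main obstacle is the local computation $\Theta'_p-\Theta_p=-2\psi_p$: for each of the three configurations at $p$ one must record which of the two edges present gets reversed — equivalently, which edge the curve enters $p$ along and which it leaves $p$ along — then compare the six turning angles before and after $\tau_C$, and do this in both the sink and the source case. The computation is elementary, but the sign bookkeeping must be done with care, and it is the single place where the hypothesis $u\neq\green$ is used; it is cleanest to carry it out in full for $u=\blue$ and to observe that $u=\red$ is symmetric.
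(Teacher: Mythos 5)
Your overall strategy is sound, and it is in fact the same curvature idea that the paper itself relies on (it is what justifies Lemma~\ref{lem:degrees_adds}); the difference is that you localise the turning at the vertices of $w$, whereas the paper isotopes $C$ to be almost horizontal and slices $w$ into three tangles $w^1,w^2,w^3$, comparing the red and green degrees slice by slice. Your pivotal local identity is indeed true: at a source $p$ of $C$ with outgoing tangent directions $\rho,\gamma,\beta$ for the $\red$, $\green$, $\blue$ edges and $u=\blue$, one checks that $\theta'^{\red}_p=\theta^{\green}_p$ and $\theta'^{\green}_p=\theta^{\red}_p$ (the red and green exterior angles merely swap), while $\theta'^{\blue}_p=-\theta^{\blue}_p$; and $\theta^{\blue}_p$ is exactly $\psi_p$, because $C$ is a connected component of $D_{\blue}(w_c)$ and the traversal defining its orientation (green edges reversed) is the traversal of that component. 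The sink case is symmetric, and this is also where $u\neq\green$ genuinely enters, as you guessed. So the plan does yield $d_t(w_{\tau_C(c)})=d_t(w_c)-2d(C)$.

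There are, however, two real gaps as written. First, the normalisation ``after an isotopy every edge of $w$ has total turning $0$'' is impossible in general: if $w$ has a digon face with edges $e_1,e_2$ of turnings $t_1,t_2$, the Umlaufsatz for its boundary gives $t_1-t_2+\alpha_p+\alpha_q=\pm2\pi$ with exterior angles $\alpha_p,\alpha_q\in(-\pi,\pi)$, so $t_1=t_2=0$ cannot hold; and webs with digons are ubiquitous. The step is repairable without the normalisation: an edge of color $\red$, $\green$, $\blue$ contributes net signed turning $+2t_e$, $0$, $-2t_e$ respectively to $2\pi d_t$ (it lies in exactly two of the three configurations, forward or reversed according to the order $\red<\green<\blue$), so when an edge of $C$ trades the two colors of $\S_u$ its contribution changes by $\mp2t_e$, which is again $-2$ times its signed contribution to the turning of the $C$-traversal; thus the edge terms obey the same ``$-2\times$'' rule as the vertex terms and the Umlaufsatz applied to $C$ still gives $-4\pi d(C)$. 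Second, the identity $\Theta'_p-\Theta_p=-2\psi_p$, which you yourself identify as the main obstacle, is asserted rather than carried out; since the whole proposition reduces to it, the six-angle computation (source and sink cases) must actually be written, e.g.\ as sketched above. Finally, the case where $C$ is a vertex-less loop is not covered by ``discarding vertex-less components'' (then $\tau_C$ does change it); it needs the separate, trivial verification the paper gives. Your reading of the statement's second case as ``negatively oriented'' is correct.
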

Before proving this result we need to introduce a few notions:
\begin{dfn}
  Let $u$ be a color of  $\S$, a \emph{$\S_u$-colored arc} $C$ is a connected oriented $\S_u$-edge-colored graph embedded in $\RR\times[0,1]$ which satisfies the following conditions:
  \begin{itemize}
  \item exactly two vertices of $C$ have valence 1, all the other have valence 2,
  \item the 1-valent vertices are located in $\RR\times\{0,1\}$, and they are the only intersection of $C$ with $\RR\times\{0,1\}$,
  \item the vertices are either sources or sinks,
  \item two adjacent edges have different colors.
  \end{itemize}
As before the \emph{coloring} of $C$ is the function from $E(C)$ to $\S_u$.
\end{dfn}
\begin{dfn}
  Let $u$ be a color of $\S$ and $C$ a $\S_u$-colored arc. We say that $C$ is \emph{$0$-oriented} if one of its 1-valent vertices is on $\RR\times\{0\}$ and the other one on $\RR\times \{1\}$. Now let us reverse the orientations of the edges colored by the highest color of $\S_u$, the arc $C$ is now coherently oriented and it has a tail and a head which have coordinates $(x_t, y_t)$ and $(x_h,y_h)$. It is \emph{positively oriented} if we are in one of the following situations:
  \begin{itemize}
  \item the ordinates are equal to 0 and the abscissae satisfy $x_h< x_t$,
  \item the ordinates are equal to 1 and the abscissae satisfy $x_t< x_h$.
  \end{itemize}
A $\S_u$-colored arc which is neither $0$-oriented nor positively oriented is \emph{negatively oriented}.
\end{dfn}
\begin{dfn}
  A \emph{$\S_u$-path configuration} is a disjoint union of some $\S_u$-colored arcs and $\S_u$-colored cycles which all lie in $\RR\times[0,1]$.
\end{dfn}
Two $\S_u$-path configurations can be composed by glued whenever they are compatible, by stacking them and resizing.
\begin{dfn}
  The \emph{degree $d(D)$} of a $\S_u$-path configuration $D$ is the number given by the formula:
\[
d(D)= p_c-n_c +\frac12(p_a-n_a),
\] where $p_c$ (\resp $n_c$) stands for the number of positively (\resp negatively) oriented $\S_u$-colored cycles and $p_a$ (\resp $n_a$) the number of positively (\resp negatively) oriented $\S_u$-colored arcs.
\end{dfn}
\begin{lem}\label{lem:degrees_adds}
  If $D_b$ and $D_t$ are two $\S_u$-path configurations, so that the composition $D=D_b\circ D_t$ (we mean here that $D_b$ is under $D_t$) is defined then we have $d(D)=d(D_t)+d(D_b)$.
\end{lem}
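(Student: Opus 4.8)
The plan is to reinterpret $d(D)$ as a normalized total turning number; once this is done, additivity under stacking is essentially formal. First I would pass to a geometric representative of $D$: reverse the orientation of every edge coloured by the highest colour of $\S_u$, so that each $\S_u$-colored cycle becomes a coherently oriented simple closed curve and each $\S_u$-colored arc a coherently oriented embedded arc; then isotope each arc, keeping its two endpoints fixed and keeping it disjoint from the rest of $D$, so that it meets $\RR\times\{0,1\}$ vertically. None of this changes $d(D)$ (which is built from purely combinatorial data), $d(D_t)$, $d(D_b)$, nor the compositions.

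Next I would establish the dictionary between the combinatorial data and turning. By the theorem of turning tangents, a coherently oriented simple closed curve $C$ in the plane has total turning $\tau(C)=+2\pi$ if $C$ is counterclockwise and $-2\pi$ otherwise, so its contribution to $d$ is $\tfrac1{2\pi}\tau(C)$. For a coherently oriented embedded arc $A$ meeting $\RR\times\{0,1\}$ vertically, the total turning is an invariant of $A$ rel its endpoints and their tangent directions, so each of the three cases of the definition can be evaluated on a model curve: a straight vertical segment for a $0$-oriented arc gives $\tau=0$, a half-circle for a positively oriented arc gives $\tau=+\pi$, and a half-circle for a negatively oriented one gives $\tau=-\pi$. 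Hence the contribution of $A$ to $d$ is again $\tfrac1{2\pi}\tau(A)$, and
\[
d(D)=\frac1{2\pi}\sum_{X}\tau(X),
\]
the sum being over the connected components $X$ of the normalized $D$; the same formula holds verbatim for $D_t$ and $D_b$.

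Then I would treat the composition itself. Every component $X$ of $D=D_b\circ D_t$ arises by concatenating, along matched interface points, a linear or cyclic chain of components of $D_t$ and of $D_b$ (the cycles of $D_t$ and $D_b$ missing the interface survive untouched). At each gluing point both incident arcs are vertical, and, the assembled curve being coherently oriented, the two velocities there point the same way; so $X$ is $C^{1}$ at its gluing points and its total turning is the sum of the turnings of the pieces. Summing over all $X$ and dividing by $2\pi$, and using the previous step for $D$, $D_t$ and $D_b$, yields $d(D)=d(D_t)+d(D_b)$.

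The hard part is the bookkeeping in the second step: one must check that the combinatorial trichotomy ``$0$-oriented / positively / negatively oriented'' really corresponds to turning $0$, $+\pi$, $-\pi$, and --- crucially --- that this dictionary still applies to the ``mixed'' arcs and cycles of $D$ assembled from several pieces. It does, because the dictionary refers only to the curve itself and not to how it was cut; similarly the vertical normalization is harmless because turning numbers are invariant under orientation-preserving diffeomorphisms of the plane, hence under the rescaling implicit in composition. A reader preferring to avoid this differential-topology language can instead run an induction on the number of times a component meets the interface line, the base case being a circle formed from one ``cup'' and one ``cap''; that argument is just the combinatorial shadow of the one above.
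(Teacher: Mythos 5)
Your proof is correct and follows essentially the same route as the paper, which simply observes that $d$ is a normalized total curvature (turning number) and invokes the Chasles relation for additivity under concatenation. You merely spell out the dictionary (cycle $\leftrightarrow \pm 2\pi$, arc $\leftrightarrow 0,\pm\pi$) and the $C^1$-gluing at the interface that the paper's one-line argument leaves implicit.
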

\begin{proof}
  It comes from the fact that the degree we defined is actually a normalized version of the total curvature of an oriented curve, which is equal to the integral of the signed curvature along the curve. And the Chasles relation gives the result.\marginpar{(find reference ?)} 
\end{proof}
\begin{lem}
  The lemma \ref{lem:Ddegree} holds as well for $\S_u$-path configurations.
\end{lem}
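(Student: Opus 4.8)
The plan is to show that the argument proving Lemma~\ref{lem:Ddegree} goes through unchanged for $\S_u$-path configurations; the only genuinely new point to check is that the ball in which $D_1$ and $D_2$ differ may be taken disjoint from the boundary $\RR\times\{0,1\}$, so that nothing concerning arcs (as opposed to cycles) is visible near it.

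First I would reduce to the local situation exactly as in the closed case. Since $d$ is additive over the connected components of a path configuration (immediate from the defining formula $d(D)=p_c-n_c+\tfrac12(p_a-n_a)$), it is enough to compare the degrees of the components of $D_1$ and of $D_2$ that meet the small ball $B$; all other components are identical and cancel in the difference $d(D_2)-d(D_1)$. Enlarging $B$ slightly to a ball $B'$ with $\overline{B'}\subset\RR\times(0,1)$, I would record two facts: inside $B'$ every relevant strand carries the lowest colour $v$ of $\S_u$, so reversing the orientations of the edges coloured by the highest colour of $\S_u$ (the normalisation used to define positivity of cycles and arcs) changes nothing inside $B'$; and, because $D_1$ and $D_2$ coincide on a neighbourhood of $\partial B$, the orientations and tangent directions of the four strands crossing $\partial B$ are the same for both.

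The core step is the local computation. Here I would use the curvature interpretation already invoked in the proof of Lemma~\ref{lem:degrees_adds}: after reversing the edges coloured by the highest colour of $\S_u$ and fixing vertical tangent rays at the $1$-valent vertices on $\RR\times\{0,1\}$, the quantity $2\pi\, d(D)$ is the total signed turning of the underlying curves, and by the Chasles relation (Lemma~\ref{lem:degrees_adds}) the difference $d(D_2)-d(D_1)$ depends only on the two local pictures inside $B$ together with the common tangent directions on $\partial B$. Inspecting these two pictures — in $D_1$ the two strands of $B$ joined ``horizontally'', in $D_2$ joined ``vertically'', with the orientations forced by $\partial B$ — one gets total turnings $-\pi$ and $+\pi$ respectively, whence $d(D_2)-d(D_1)=\tfrac1{2\pi}\bigl(\pi-(-\pi)\bigr)=1$, exactly as in the closed case (cf.\ Figure~\ref{fig:D1D2config}).

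I expect the only delicate point to be bookkeeping rather than substance: one must check that the forced boundary orientations really pin down the signs of the two local turnings — equivalently, run through the few possibilities for how the two strands through $B$ sit inside the components of $D_1$ (inside one arc, inside one cycle, inside two arcs, inside an arc and a cycle) and confirm the answer is $+1$ each time — and, for readers who prefer to avoid curvature, note that near $B$ an arc looks exactly like a piece of a cycle, so that the figure-based case analysis of Lemma~\ref{lem:Ddegree} applies verbatim. Neither verification is more than routine.
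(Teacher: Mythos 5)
Your argument is correct and is essentially the paper's own: the paper disposes of this lemma with ``clear from the previous lemma'', i.e.\ exactly the curvature/Chasles localization you spell out, reducing to the local picture (where all strands carry the lowest colour $v$, so the orientation-reversal convention is invisible) and computing the turning difference $\pi-(-\pi)=2\pi$, hence $d(D_2)-d(D_1)=1$. Your write-up is just a fleshed-out version of that one-line proof, so there is nothing to object to.
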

\begin{proof}
  This is clear from the previous lemma.
\end{proof}
\begin{proof}[Proof of proposition~\ref{prop:pm2}]
If $C$ is a vertex-less loop the statement is obvious so we consider the other cases. The coloring $\tau_C(c)$ is denoted by $c'$. As the two situations are symmetric we may assume that $u=\blue$. The $\S_u$-colored configurations $D_\blue(w_c)$ and $D_\blue(w_{c'})$ are the same except for the cycle corresponding to $C$ which is colored in the opposite manner and hence oriented in the opposite manner and we then have: $d(D_\blue(w_c)) = d(D_\blue(w_{c'})) \pm 2$ (depending on how $C$ is oriented). We now look at $D_*=D_{*}(w_c)$ and $D_*'=D_*(w_{c'})$ for $*=\green$ and $\red$. We can perform an isotopy so that the cycle $C$ has all but one of its edges horizontal, and the non-horizontal one, is over the horizontal ones (see figure~\ref{fig:w123}).
\begin{figure}[!ht]
  \centering
  \begin{tikzpicture}[scale =1]
    \begin{scope}[scale={1},decoration={markings, mark=at
     position 0.5 with {\arrow{>}}},postaction={decorate}]
   \draw[postaction= {decorate}, color = darkgreen] (0,0) -- +(1, 0);
   \draw[postaction= {decorate}, color = darkred] (0,0) -- +(-1, 0);
   \draw[postaction= {decorate}, color = darkblue] (0,0) -- +(0,-1);
   \draw[postaction= {decorate}, color = darkblue] (-0.5, 0.8)-- (-1,0);
   \draw[postaction= {decorate}, color = darkgreen] (2,0) -- +(1, 0);
   \draw[postaction= {decorate}, color = darkred] (2,0) -- +(-1, 0);
   \draw[postaction= {decorate}, color = darkblue] (2,0) -- +(0,+1);
   \draw[postaction= {decorate}, color = darkblue] (1, -1)-- (1,0);
   \draw[postaction= {decorate}, color = darkblue] (4,0) -- +(0,-1);
   \draw[postaction= {decorate}, color = darkblue] (3, -1)-- (3,0);
   \draw[postaction= {decorate}, color = darkred] (4,0) -- +(-1, 0);
   \draw[postaction= {decorate}, color = darkgreen] (4,0) .. controls +(2,0) and(4,1.3) .. (1.5,1.3) .. controls (-1,1.3) and +(-2, 0) .. (-1,0);
   \draw[dotted] (-2,0.5)-- (5,0.5);
   \draw[dotted] (-2,-0.5)-- (5,-0.5);
   \node at (-2.5,0) {$w_2$}; 
   \node at (-2.5,-1) {$w_1$}; 
   \node at (-2.5,1) {$w_3$}; 
\end{scope}
  \end{tikzpicture}
  \caption{We decompose $w$ into three web tangles: $w_1$, $w_2$ and $w_3$}
  \label{fig:w123}
\end{figure}
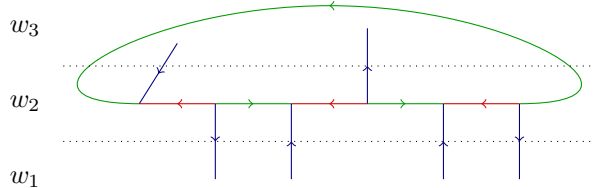
 We now see $w$ as a composition of three web tangles: $w= w^3\circ w^2 \circ w^1$, where $w^2$ contains only the horizontal edges of $C$, two parts of the non-horizontal edge of $C$, the half edges of $w$ which are touching $C$ (they all have color $\blue$ in $w_c$), and vertical arcs far from $C$. It's clear from our hypotheses that $w^1_c= w^1_{c'}$ and $w^3_c$ and $w^3_{c'}$ differ just by one arc which is $\red$ in one and $\green$ in the other one, but as $\blue$ is bigger than this two colors, we have $d(D_{\green}(w^3_c))=d(D_{\green}(w^3_{c'}))\pm 1$ and $d(D_{\red}(w^3_{c'}))=d(D_{\red}(w^3_{c}))\pm 1$, where the $\pm$ signs is the same in both cases. On the other hand we have:
$d(D_{\red}(w^2_c))=d(D_{\green}(w^2_{c'}))$ and $d(D_{\green}(w^2_c))=d(D_{\red}(w^2_{c'}))$. And summing all this equalities together and using the lemma~\ref{lem:degrees_adds} we conclude that:
\[
d(D_{\red}(w_c))+d(D_{\green}(w_{c}))= d(D_{\red}(w_{c'}))+d(D_{\green}(w_{c'})).
\]
This is enough to conclude that $d_t(w_c)=d_t(w_{c'})\pm 2$, depending on how $C$ is oriented. 
\end{proof}
We now define two equivalence relations on $\mathrm{col}(w)$ and the rest of this section will be devoted two show that for a given web $w$ all the colorings are equivalent (see theorem~\ref{thm:coltaueq}).
\begin{dfn}\label{dfn:taueq}
  Let $w$ be a closed web, two colorings $c$ and $c'$ are said to be weakly $\tau$-close if one can find a bi-colored cycle $C$ in $w_c$ so that $c'=\tau_C(c)$. 
They are said to be strongly $\tau$-close if one can find a $\S_\blue$- or a $\S_{\red}$-bi-colored cycle $C$ in $w_c$ so that $c'=\tau_C(c)$.
The weak (\resp strong) $\tau$-equivalence relation is generated by the pairs $(c,c')$ for $c$ and $c'$ (\resp strongly) $\tau$-close from each other.
\end{dfn}
\begin{req}
It's clear from the definition that if $c$ and $c'$ are weakly $\tau$-equivalent, then they are strongly $\tau$-equivalent. The notion of strongly $\tau$-closeness, makes sense in view of proposition \ref{prop:pm2}. Before proving theorem~\ref{thm:coltaueq}, we show that the two different notions of $\tau$-equivalence are the same (see lemma~\ref{lem:tau-eq}).  
\end{req}
\begin{lem}\label{lem:hypesamecol}
  Consider a colored web $w$ and two edges $e_1$ and $e_2$ as in figure~\ref{fig:edgelemma}, then if $e_1$ and $e_2$ are colored differently, then they do not belong to the same bi-colored cycle.
  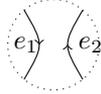
\begin{figure}[!ht]
    \centering
    \begin{tikzpicture}[scale=0.6]
      \begin{scope}[yscale = {1}, xscale={1},decoration={markings, mark=at
     position 0.5 with {\arrow{>}}},postaction={decorate}, xshift=0cm]
\draw[dotted] (0,0) circle (1cm);
\draw[postaction={decorate}] (-50:1) node[midway, label=right:{~~$e_2$}] {} .. controls (0.2,0) and (0.2,0) .. (50:1);
\draw[postaction={decorate}] (130:1)  node[midway, label=left:{$e_1$~~}] {} .. controls (-0.2,0) and (-0.2,0) .. (-130:1);
\end{scope}
    \end{tikzpicture}
    \caption{The edges $e_1$ and $e_2$ with their orientations.}
    \label{fig:edgelemma}
  \end{figure}
\end{lem}
\begin{proof}
  If the two edges have different colors, their orientation are not compatible to fit in the same bi-colored cycle.
\end{proof}
\begin{lem}\label{lem:tau-eq}
  If $w$ is a closed web, and $c$ and $c'$ are weakly $\tau$-equivalent colorings of $w$, then $c$ and $c'$ are strongly $\tau$-equivalent. 
\end{lem}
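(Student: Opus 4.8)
The statement to prove is Lemma~\ref{lem:tau-eq}: if $c$ and $c'$ are weakly $\tau$-equivalent then they are strongly $\tau$-equivalent. Since the weak equivalence is generated by pairs $(c, \tau_C(c))$ for $C$ a $\S_u$-bi-colored cycle with $u$ arbitrary, it suffices to handle a single weak $\tau$-move and show it can be realised as a composition of strong $\tau$-moves (those exchanging $\blue\leftrightarrow\green$ on a $\S_\red$-cycle, or $\blue\leftrightarrow\red$ on a $\S_\green$-cycle — i.e. avoiding the $\S_\blue$-cycles that swap $\red\leftrightarrow\green$). So the only case requiring work is when $C$ is a $\S_\blue$-bi-colored cycle, coloured with $\red$ and $\green$, and we must express $\tau_C$ as a product of strong moves.

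**The plan.** The plan is to take the $\S_\blue$-cycle $C$ (coloured $\red/\green$) and ``detour'' each edge through $\blue$: informally, I want to first swap the $\red$ edges of $C$ to $\blue$ using a $\S_\green$-move, then swap the resulting $\blue$ edges with the $\green$ edges via a $\S_\red$-move, then clean up with another $\S_\green$-move, so that the net effect on $C$ is exactly the transposition $\red\leftrightarrow\green$ while every intermediate move is strong. The obstruction is that $C$ alone is typically \emph{not} a $\S_\green$- or $\S_\red$-bi-colored cycle in $w_c$: its neighbouring edges carry the third colour, so exchanging colours only along $C$ need not yield a valid colouring. This is exactly where Lemma~\ref{lem:hypesamecol} enters: at each vertex of $C$, the two edges of $C$ meeting there have the same colour (by the orientation/sink-source constraint, differently-coloured edges at such a configuration cannot lie on a common bi-colored cycle), so the ``attaching'' edge sticking out of $C$ at that vertex has the remaining colour — and this rigidity controls which auxiliary cycles one can use.

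**Carrying it out.** First I would observe that $C$ decomposes, as a cyclic sequence, into maximal monochromatic arcs alternating $\red$ and $\green$; at each vertex where the colour switches there is a third edge of colour $\blue$, and at each interior vertex of a monochromatic arc the third edge carries the \emph{other} of $\{\red,\green\}$. Next, for a monochromatic $\red$-subarc of $C$, I would find a $\S_\green$-bi-colored cycle $C_1$ in $w_c$ containing that subarc (it continues into the ambient web through the $\blue$ edges at the ends and the $\green$ edges along the way, forming a closed $\red/\blue$ cycle), apply $\tau_{C_1}$ — a strong move — recolouring that subarc to $\blue$; I'd iterate over all $\red$-subarcs. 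Then $C$ has become a $\red/\green$... no: it has become an all-$\blue$-and-$\green$ cycle, i.e. a genuine $\S_\red$-bi-colored cycle, on which I apply a strong $\tau$-move swapping $\blue\leftrightarrow\green$; then I undo the first batch of moves (now swapping $\blue\to\green$ on the arcs that were originally $\red$, again via $\S_\green$-cycles). Tracking the colours shows the composite sends $\red\to\blue\to\blue$ and $\green\to\green\to\blue$ on the relevant pieces...

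**The main obstacle.** I expect the genuinely delicate point to be verifying that at each stage the auxiliary cycle one wants to flip really \emph{is} a bona fide bi-colored cycle of the current colouring, and that the batches of moves can be performed independently (the supports of auxiliary cycles associated to different monochromatic subarcs may overlap in the ambient web, so one must either argue they are disjoint or order the moves carefully). Lemma~\ref{lem:hypesamecol} is the tool that forces the local colour pattern around $C$ to be rigid enough for this bookkeeping to close up; the proof will consist of setting up the monochromatic decomposition, invoking that rigidity, and then checking the colour-tracking through the three (or $2k+1$) strong moves is the identity-plus-$(\red\leftrightarrow\green)$-on-$C$.
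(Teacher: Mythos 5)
Your core idea---expressing the forbidden swap as a conjugation, i.e.\ re-routing one of the two colours of $C$ through the third colour by auxiliary bi-coloured cycles, performing the allowed swap, and undoing---is exactly the mechanism of the paper's proof, so the germ is right. But the execution has concrete problems. First, you have the strong/weak split backwards: by Definition~\ref{dfn:taueq} the strong moves are those on $\S_\blue$- and $\S_\red$-bi-coloured cycles (this is what Proposition~\ref{prop:pm2}, with its hypothesis $u\neq\green$, is designed for), so the only case needing work is a $\S_\green$-bi-coloured (i.e.\ $\red$/$\blue$) cycle. As written, your decomposition expresses the $\S_\blue$-move through a $\S_\green$-move, i.e.\ a strong move through the weak-only one, which is not the implication the lemma needs; the fix is a permutation of colours, but the slip shows the roles of the colours were not tracked against the definitions. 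Second, your structural analysis of $C$ is wrong: since the colouring of $w$ is proper and $w$ is cubic, a bi-coloured cycle strictly alternates its two colours, so there are no monochromatic subarcs with interior vertices, and the third edge at \emph{every} vertex of $C$ carries the deleted colour. Relatedly, Lemma~\ref{lem:hypesamecol} does not say that the two edges of $C$ at a vertex share a colour (they never do); it is a statement about a specific oriented square configuration and is used only in the square case of Theorem~\ref{thm:coltaueq}, not here.

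Third, and most importantly, the point you yourself flag as ``the main obstacle''---that the auxiliary cycles exist, can be flipped independently, and that the final batch of moves exactly undoes the side effects of the first---is precisely the content a proof must supply, and you leave it open. The paper resolves it with no arc-by-arc bookkeeping: for any colouring, the $\S_\blue$-bi-coloured cycles are simply the connected components of $w$ with its $\blue$ edges deleted, hence every non-$\blue$ edge lies on exactly one of them and they are pairwise disjoint. One flips all such cycles of $w_c$ meeting the $\red$/$\blue$ cycle $C$ (they meet it along its $\red$ edges), which turns $C$ into a $\S_\red$-bi-coloured cycle; one flips $C$ (a strong move); then one flips the $\S_\blue$-cycles of the new colouring meeting $C$. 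Off $C$, the edges touched by the first and last batches coincide, so they cancel and the composite is $\tau_C$. Without this (or an equivalent) argument for existence, disjointness and cancellation, your proposal is a plan rather than a proof.
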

\begin{proof}
  It's enough to show that if two colorings $c$ and $c'$ are weakly $\tau$-close then they are strongly $\tau$-equivalent. As the other cases are straightforward let us suppose that $c'=\tau_C(c)$ with $C$ a $\S_{\green}$-bi-colored cycle. Consider $(C'_i)$ the collection of all the $\S_{\blue}$-bi-colored cycles in $w_c$ which intersect $C$ (it should be at some $\red$ edges). The $C'_i$'s are all disjoint so that we can define $c_1= \tau_{(C'_i)}(c)$ the coloring similar to $c$ except that on all $C'_i$'s, the colors $\red$ and $\green$ are exchanged. It's clear that $c$ and $c_1$ are strongly $\tau$-equivalent. In $w_{c_1}$, $C$ is now $\S_{\red}$-bi-colored, so that we can define $c_2=\tau_C(c_1)$. Now we consider $(C''_i)$ the collection of all the $\S_{\blue}$-bi-colored cycles in $w_{c_2}$ which intersect $C$ (it should be at some $\green$ edges). We define $c_3=\tau_{(C''_i)}(c_2)$ as we defined $c_1$. It's clear that $c_3$ is strongly $\tau$-equivalent to $c_2$ and hence to $c_1$. It's easy to check that $c_3=c'$ because the collection of edges of the $C'_i$'s disjoint from $C$ and the the collection of edges of the $C''_i$'s disjoint from $C$ are equal.
\end{proof}
Because of lemma \ref{lem:tau-eq} we speak of $\tau$-equivalence instead of strong or weak $\tau$-equivalences.
\begin{thm}\label{thm:coltaueq}
  Let $w$ be a closed web then all colorings of $w$ are $\tau$-equivalent.
\end{thm}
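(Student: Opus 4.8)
The plan is to induct on the number of vertices $\#V(w)$, at each step peeling off an innermost digon or square supplied by Proposition~\ref{prop:closed2elliptic}. Since colorings and $\tau$-moves are supported on individual connected components, it suffices to treat a connected $w$. If $w$ has no vertex, it is a single circle with exactly three colorings (monochromatic red, green, or blue); recoloring the whole circle is a single $\tau$-move, carried out once with the circle regarded as a $\S_\blue$-colored cycle and once as a $\S_\red$-colored cycle, and these two moves connect red to green and green to blue, so all three are $\tau$-equivalent. Throughout, Lemma~\ref{lem:tau-eq} lets us use weak $\tau$-moves freely.

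For the inductive step, assume $w$ is connected with at least one vertex. By Proposition~\ref{prop:closed2elliptic} it then has a $2$-sided or a $4$-sided face, and we fix an innermost one, so that it is an actual digon or square bounding an embedded disc. In the digon case, let $w'$ be the web obtained by replacing the digon and its two vertices by a single strand; it has two fewer vertices. In any coloring of $w$ the two legs of the digon carry a common color $a$, so there is a well-defined $2$-to-$1$ restriction map $\col(w)\to\col(w')$, and the two colorings lying over a given $c'\in\col(w')$ differ precisely by a $\tau$-move along the digon, which is a $\S_a$-colored cycle. Moreover, any $\tau$-move in $w'$ along a bi-colored cycle $C'$ lifts to $w$: either $C'$ misses the contracted strand, in which case it is already a bi-colored cycle of $w$, or it runs along that strand and re-routes through the digon edge of the appropriate color, with the same net effect downstairs. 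Combining these observations with the inductive hypothesis for $w'$ shows all colorings of $w$ are $\tau$-equivalent.

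In the square case, let $w'$ (\resp $w''$) be the web obtained by replacing the square by two parallel strands, split one way (\resp the other); each has four fewer vertices, and by Lemma~\ref{lem:square} the injections $\phi'\colon\col(w')\to\col(w)$ and $\phi''\colon\col(w'')\to\col(w)$ have images partitioning $\col(w)$. Three points suffice. First, every coloring of $w$ is $\tau$-equivalent to one whose square uses only two colors on its four sides: if the square is already $2$-colored there is nothing to do, while if it is $3$-colored, exactly one pair of opposite sides shares a color and all three colors occur, and a corner-by-corner inspection shows one of the two remaining sides lies on a bi-colored cycle meeting the square only along that side and two of its legs, so a single $\tau$-move makes the square $2$-colored. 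Second, a $2$-colored square forces its four legs to share the third color, so such a coloring lies in $\im\phi'$ or $\im\phi''$, and the two $2$-colored patterns of the square sides differ by a $\tau$-move along the square boundary (a $\S_u$-colored cycle), which joins $\im\phi'$ to $\im\phi''$. Third, exactly as in the digon case, a $\tau$-move in $w'$ lifts through $\phi'$ --- bi-colored cycles avoiding the replaced strands descend unchanged, those using them re-route through square sides --- and likewise through $\phi''$. Then the inductive hypothesis makes each of $\im\phi'$ and $\im\phi''$ a single $\tau$-class, the first point carries every coloring of $w$ into their union, and the second point identifies the two classes; hence all colorings of $w$ are $\tau$-equivalent, completing the induction.

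Apart from the bookkeeping, the substantive steps are both in the square case: showing that one Kempe swap always renders the square $2$-colored, and showing that bi-colored cycles running through a replaced pair of strands re-route consistently through the sides of the square. Each is a finite case analysis on the colors of the four square sides and the four legs, and this is where planarity and the sink/source orientation condition genuinely enter; in particular one must check that the two candidate re-routings cannot conflict, which holds because the bi-colored cycle through a prescribed edge is unique.
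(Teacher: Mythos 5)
Your overall strategy (induction on the number of vertices via Proposition~\ref{prop:closed2elliptic}, with separate circle, digon and square cases) is the same as the paper's, and the circle case, the digon case and your ``first point'' are sound --- the corner-by-corner inspection you invoke is essentially Lemma~\ref{lem:hypesamecol} combined with planarity, exactly as in the paper. The genuine gap is in your third point, namely the claim that every $\tau$-move in $w'$ lifts \emph{through the fixed map} $\phi'$, so that $\im\phi'$ (and likewise $\im\phi''$) is a single $\tau$-class. Take $c'\in\mathrm{col}(w')$ giving both horizontal strands the color $\red$; by the tables of Lemma~\ref{lem:square}, $\phi'(c')$ colors the four legs $\red$, the two horizontal sides $\green$ and the two vertical sides $\blue$. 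Now let $C'$ be a $\red$--$\blue$ bi-colored cycle of $w'_{c'}$ running through the bottom strand: it pairs the bottom-left leg with the bottom-right leg. But in $w_{\phi'(c')}$ the $\red$--$\blue$ subgraph near the square consists of the two paths (bottom-left leg, left side, top-left leg) and (bottom-right leg, right side, top-right leg), which pair the legs the other way. Hence there is \emph{no} bi-colored cycle of $w_{\phi'(c')}$ obtained by re-routing $C'$ through the square sides with ``the same net effect downstairs''; your appeal to the uniqueness of the bi-colored cycle through a prescribed edge does not help, because that unique cycle simply is not a lift of $C'$. So the intermediate assertion that each of $\im\phi'$, $\im\phi''$ forms a single $\tau$-class is not established by your lifting argument (it holds only a posteriori, as a consequence of the theorem).

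The repair is precisely the device used in the paper's proof: when the two strands of $w'$ share a color, there are two colorings of $w$ inducing $c'$ (one in $\im\phi'$, one in $\im\phi''$), these two are $\tau$-close via the square boundary, and every bi-colored cycle of $w'_{c'}$ lifts to $w$ equipped with \emph{one} of these two colorings --- in the example above, the lift exists only after first performing the square-boundary swap. Consequently a path of $\tau$-moves in $w'$ lifts to a possibly longer path in $w$ which may hop back and forth between $\im\phi'$ and $\im\phi''$; combined with your first and second points this does give the theorem, but your argument must be restructured around this corrected lifting (as the paper does, working over one smoothing and inserting the extra swaps) rather than around the claim that each image is separately a $\tau$-class. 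Note also that the bridge in your second point requires the existence of at least one coloring of $w$ with a $2$-colored square; your first point supplies it whenever $\mathrm{col}(w)\neq\emptyset$, so that part is fine once the lifting is fixed.
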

\begin{proof}
  We show this by induction on the number of vertices, using the fact that any closed web contains a circle, a digon or a square. Let $w$ be a web and $c_0$ and $c_1$ be two colorings of $w$. 

If $w$ contains a circle $C$, then we consider $w'$ the web $w$ with this circle removed. $c_0$ and $c_1$ induce colorings $c'_0$ and $c'_1$ on $w'$. By induction we know that $c'_0$ and $c'_1$ are $\tau$-equivalent. All the cycles of $w'$ needed to go via $\tau$-moves from $c'_0$ to $c'_1$ can be lifted in $w$. Hence we obtained a coloring $\widetilde{c}_1$, which is $\tau$-equivalent to $c_0$ and equal to $c_1$ everywhere but maybe on the circle, so if necessary we perform $\tau_C$ on $\tilde{c}_1$ where $C$ is seen as a bi-colored circle and we obtain $c_1$ and this show that $c_0$ and $c_1$ are $\tau$-equivalent. 

Suppose now that $w$ contains a digon. We do the same thing, we consider the web $w'$ where the digon figure is replaced by a single strand, and we can play the same game because for any coloring the two strands of the digon form a bicolored cycle. 

Now if $w$ contain a square, thanks to lemma~\ref{lem:hypesamecol}, we may suppose (up to a single $\tau$-move) that $c_0$ has the same color on all the four strands touching the square. We consider the web $w'$ corresponding to one smoothing of the square which is so that $c_0$ and $c_1$ induces naturally colorings on $w'$. We denote them by $c'_0$ and $c'_1$. Let $c$ be a coloring  of $w'$. If the two strands of $w'$ which replace the square are colored in different manners, there is a canonical way to construct a coloring of $w$ and any bi-colored cycle of $w'$ can be lifted up in $w$. If the two strands have the same color, there are two different colorings of $w$ which can induce $c'$ on $w'$, but these two are obviously $\tau$-close. And if $C$ is a bi-colored cycle in $w'_{c'}$ then it can be lifted up in $w$ to a bi-colored cycle endowed with one of this two coloring. So we can play the same game as before: thanks to a path of two by two $\tau$-close coloring from $c'_0$ to $c'_1$, we can construct a (maybe longer) path of two by two $\tau$-close colorings from $c_0$ to $c_1$. And this concludes.
\end{proof}

% One can extend the $\tau$-equivalence to coloring of $\epsilon$-webs, by allowing bi-colored arcs in definition \ref{dfn:taueq} where arcs are supposed to begin and to end with a $1$-vertex, we have then following easy corollary:
% \begin{cor}
%   Let $(\epsilon,c_{\epsilon})$ be a colored sequence of signs. Let $w$ be an $\epsilon$-web and $c_1$ and $c_2$ be two colorings of $w$. Then they are $\tau$-equivalent.   
% \end{cor}
% \begin{proof}
%   We consider the closed web $u=\overline{w}w$, the coloring $c_1$ and $c_2$ induced some colorings $c'_1$ and $c_2'$ on $u$ by taking their mirror images on $\overline{w}$. Theorem~\ref{thm:coltaueq} tells us that $c_1'$ and $c_2'$ are $\tau$-equivalent, now following a path of two by two $\tau$-close colorings we see that the restriction of this coloring are either $\tau$-close either $\tau$ equivalent via a sequence of bi-colored arcs. This show that $c_1$ and 
% $c_2$ are $\tau$-equivalent.\end{proof}

% One may ask if the theorem~\ref{thm:coltaueq} remains true when we consider  trivalent $\S$-edge-colorable
% %\footnote{That is a bridge-less trivalent graph which is not a snark.} 
% graphs which are not webs. We show in the following that when one removes the planarity condition or the bipartite condition, the result fails.

\begin{exa}
  The dodecahedral graph $G$ does not satisfy the theorem~\ref{thm:coltaueq}.
\end{exa}
\begin{proof}
  Consider the coloring $c$ given by figure~\ref{fig:coldod}. We claim (see figure~\ref{fig:dodpath}) that for every color $u$ in $\S$, it contains only one $\S_u$ bi-colored cycle $C_u$, so that $\tau_{C_u}(c)$ is the same coloring as $c$ but with the two colors of $\S_u$ exchanged, to that the structure of $c$ is not essentially changed.
  \begin{figure}[!ht]
    \centering
    \begin{tikzpicture}[scale=1]
    \begin{scope}[yscale = {1}, xscale={1},decoration={markings, mark=at
     position 0.5 with {\arrow{>}}},postaction={decorate}, xshift=0cm, yshift= 8cm]
\draw[color=darkblue] (18:3) -- (90:3); 
\draw[color=darkred] (90:3) -- (162:3);
\draw[color=darkblue](162:3) -- ( 234:3);
\draw[color=darkred]( 234:3) -- (306:3);
\draw[color = darkgreen](306:3) -- (18:3);
\draw[color=darkred] (18:3) -- (18:2); 
\draw[color=darkgreen] (90:3) -- (90:2);
\draw[color=darkgreen](162:3) -- (162:2);
\draw[color=darkgreen]( 234:3) -- (234:2);
\draw[color = darkblue](306:3) -- (306:2);
\draw[color=darkred] (-18:1) -- (54:1); 
\draw[color=darkblue] (54:1) -- (126:1);
\draw[color=darkred](126:1) -- ( 198:1);
\draw[color=darkblue]( 198:1) -- (270:1);
\draw[color = darkgreen](270:1) -- (-18:1);
\draw[color=darkblue] (-18:1) -- (-18:2);
\draw[color=darkgreen] (54:1) -- (54:2);
\draw[color=darkgreen](126:1) -- (126:2);
\draw[color=darkgreen]( 198:1) -- ( 198:2);
\draw[color = darkred](270:1) -- (270:2);
\draw[color=darkgreen] (-18:2) -- (18:2); 
\draw[color=darkblue] (18:2) -- (54:2);
\draw[color=darkred](54:2) -- (90:2);
\draw[color=darkblue]( 90:2) -- (126:2);
\draw[color = darkred](126:2) -- (162:2);
\draw[color=darkblue] (162:2) -- (198:2); 
\draw[color=darkred] (198:2) -- (234:2);
\draw[color=darkblue](234:2) -- (270:2);
\draw[color=darkgreen]( 270:2) -- (306:2);
\draw[color = darkred](306:2) -- (-18:2);
\end{scope}  
    \end{tikzpicture}
    \caption{The coloring $c$ of the dodecahedral graph}\label{fig:coldod}
  \end{figure}
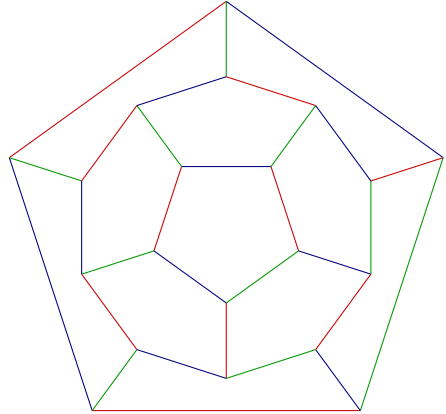
\begin{figure}[!ht]
  \centering
  \begin{tikzpicture}[scale= 0.67]
    \begin{scope}[yscale = {1}, xscale={1},decoration={markings, mark=at
     position 0.5 with {\arrow{>}}},postaction={decorate}, xshift=0cm]
\draw[color=yellow, line width=3pt] (18:3) -- (90:3); 
\draw[color=yellow, line width=3pt] (90:3) -- (162:3);
\draw[color=yellow, line width=3pt](162:3) -- ( 234:3);
\draw[color=yellow, line width=3pt]( 234:3) -- (306:3);
\draw[color = darkgreen](306:3) -- (18:3);
\draw[color=yellow, line width=3pt] (18:3) -- (18:2); 
\draw[color=darkgreen] (90:3) -- (90:2);
\draw[color=darkgreen](162:3) -- (162:2);
\draw[color=darkgreen]( 234:3) -- (234:2);
\draw[color = yellow, line width=3pt](306:3) -- (306:2);
\draw[color=yellow, line width=3pt] (-18:1) -- (54:1); 
\draw[color=yellow, line width=3pt] (54:1) -- (126:1);
\draw[color=yellow, line width=3pt](126:1) -- ( 198:1);
\draw[color=yellow, line width=3pt]( 198:1) -- (270:1);
\draw[color = darkgreen](270:1) -- (-18:1);
\draw[color=yellow, line width=3pt] (-18:1) -- (-18:2);
\draw[color=darkgreen] (54:1) -- (54:2);
\draw[color=darkgreen](126:1) -- (126:2);
\draw[color=darkgreen]( 198:1) -- ( 198:2);
\draw[color = yellow, line width=3pt](270:1) -- (270:2);
\draw[color=darkgreen] (-18:2) -- (18:2); 
\draw[color=yellow, line width=3pt] (18:2) -- (54:2);
\draw[color=yellow, line width=3pt](54:2) -- (90:2);
\draw[color=yellow, line width=3pt]( 90:2) -- (126:2);
\draw[color = yellow, line width=3pt](126:2) -- (162:2);
\draw[color=yellow, line width=3pt] (162:2) -- (198:2); 
\draw[color=yellow, line width=3pt] (198:2) -- (234:2);
\draw[color=yellow, line width=3pt](234:2) -- (270:2);
\draw[color=darkgreen]( 270:2) -- (306:2);
\draw[color = yellow, line width=3pt](306:2) -- (-18:2);
\end{scope}

\begin{scope}[yscale = {1}, xscale={1},decoration={markings, mark=at
     position 0.5 with {\arrow{>}}},postaction={decorate}, xshift=0cm]
\draw[color=darkblue] (18:3) -- (90:3); 
\draw[color=darkred] (90:3) -- (162:3);
\draw[color=darkblue](162:3) -- ( 234:3);
\draw[color=darkred]( 234:3) -- (306:3);
\draw[color = darkgreen](306:3) -- (18:3);
\draw[color=darkred] (18:3) -- (18:2); 
\draw[color=darkgreen] (90:3) -- (90:2);
\draw[color=darkgreen](162:3) -- (162:2);
\draw[color=darkgreen]( 234:3) -- (234:2);
\draw[color = darkblue](306:3) -- (306:2);
\draw[color=darkred] (-18:1) -- (54:1); 
\draw[color=darkblue] (54:1) -- (126:1);
\draw[color=darkred](126:1) -- ( 198:1);
\draw[color=darkblue]( 198:1) -- (270:1);
\draw[color = darkgreen](270:1) -- (-18:1);
\draw[color=darkblue] (-18:1) -- (-18:2);
\draw[color=darkgreen] (54:1) -- (54:2);
\draw[color=darkgreen](126:1) -- (126:2);
\draw[color=darkgreen]( 198:1) -- ( 198:2);
\draw[color = darkred](270:1) -- (270:2);
\draw[color=darkgreen] (-18:2) -- (18:2); 
\draw[color=darkblue] (18:2) -- (54:2);
\draw[color=darkred](54:2) -- (90:2);
\draw[color=darkblue]( 90:2) -- (126:2);
\draw[color = darkred](126:2) -- (162:2);
\draw[color=darkblue] (162:2) -- (198:2); 
\draw[color=darkred] (198:2) -- (234:2);
\draw[color=darkblue](234:2) -- (270:2);
\draw[color=darkgreen]( 270:2) -- (306:2);
\draw[color = darkred](306:2) -- (-18:2);
\end{scope}

\begin{scope}[yscale = {1}, xscale={1},decoration={markings, mark=at
     position 0.5 with {\arrow{>}}},postaction={decorate}, xshift=8cm]
\draw[color=darkblue] (18:3) -- (90:3); 
\draw[color=yellow, line width=3pt] (90:3) -- (162:3);
\draw[color=darkblue](162:3) -- ( 234:3);
\draw[color=yellow, line width=3pt]( 234:3) -- (306:3);
\draw[color = yellow, line width=3pt](306:3) -- (18:3);
\draw[color=yellow, line width=3pt] (18:3) -- (18:2); 
\draw[color=yellow, line width=3pt] (90:3) -- (90:2);
\draw[color=yellow, line width=3pt](162:3) -- (162:2);
\draw[color=yellow, line width=3pt]( 234:3) -- (234:2);
\draw[color = darkblue](306:3) -- (306:2);
\draw[color=yellow, line width=3pt] (-18:1) -- (54:1); 
\draw[color=darkblue] (54:1) -- (126:1);
\draw[color=yellow, line width=3pt](126:1) -- ( 198:1);
\draw[color=darkblue]( 198:1) -- (270:1);
\draw[color = yellow, line width=3pt](270:1) -- (-18:1);
\draw[color=darkblue] (-18:1) -- (-18:2);
\draw[color=yellow, line width=3pt] (54:1) -- (54:2);
\draw[color=yellow, line width=3pt](126:1) -- (126:2);
\draw[color=yellow, line width=3pt]( 198:1) -- ( 198:2);
\draw[color = yellow, line width=3pt](270:1) -- (270:2);
\draw[color=yellow, line width=3pt] (-18:2) -- (18:2); 
\draw[color=darkblue] (18:2) -- (54:2);
\draw[color=yellow, line width=3pt](54:2) -- (90:2);
\draw[color=darkblue]( 90:2) -- (126:2);
\draw[color = yellow, line width=3pt](126:2) -- (162:2);
\draw[color=darkblue] (162:2) -- (198:2); 
\draw[color=yellow, line width=3pt] (198:2) -- (234:2);
\draw[color=darkblue](234:2) -- (270:2);
\draw[color=yellow, line width=3pt]( 270:2) -- (306:2);
\draw[color = yellow, line width=3pt](306:2) -- (-18:2);
\end{scope}

\begin{scope}[yscale = {1}, xscale={1},decoration={markings, mark=at
     position 0.5 with {\arrow{>}}},postaction={decorate}, xshift=8cm]
\draw[color=darkblue] (18:3) -- (90:3); 
\draw[color=darkred] (90:3) -- (162:3);
\draw[color=darkblue](162:3) -- ( 234:3);
\draw[color=darkred]( 234:3) -- (306:3);
\draw[color = darkgreen](306:3) -- (18:3);
\draw[color=darkred] (18:3) -- (18:2); 
\draw[color=darkgreen] (90:3) -- (90:2);
\draw[color=darkgreen](162:3) -- (162:2);
\draw[color=darkgreen]( 234:3) -- (234:2);
\draw[color = darkblue](306:3) -- (306:2);
\draw[color=darkred] (-18:1) -- (54:1); 
\draw[color=darkblue] (54:1) -- (126:1);
\draw[color=darkred](126:1) -- ( 198:1);
\draw[color=darkblue]( 198:1) -- (270:1);
\draw[color = darkgreen](270:1) -- (-18:1);
\draw[color=darkblue] (-18:1) -- (-18:2);
\draw[color=darkgreen] (54:1) -- (54:2);
\draw[color=darkgreen](126:1) -- (126:2);
\draw[color=darkgreen]( 198:1) -- ( 198:2);
\draw[color = darkred](270:1) -- (270:2);
\draw[color=darkgreen] (-18:2) -- (18:2); 
\draw[color=darkblue] (18:2) -- (54:2);
\draw[color=darkred](54:2) -- (90:2);
\draw[color=darkblue]( 90:2) -- (126:2);
\draw[color = darkred](126:2) -- (162:2);
\draw[color=darkblue] (162:2) -- (198:2); 
\draw[color=darkred] (198:2) -- (234:2);
\draw[color=darkblue](234:2) -- (270:2);
\draw[color=darkgreen]( 270:2) -- (306:2);
\draw[color = darkred](306:2) -- (-18:2);
\end{scope}

\begin{scope}[yscale = {1}, xscale={1},decoration={markings, mark=at
     position 0.5 with {\arrow{>}}},postaction={decorate}, xshift=16cm]
\draw[color=yellow, line width=3pt] (18:3) -- (90:3); 
\draw[color=darkred] (90:3) -- (162:3);
\draw[color=yellow, line width=3pt](162:3) -- ( 234:3);
\draw[color=darkred]( 234:3) -- (306:3);
\draw[color = yellow, line width=3pt](306:3) -- (18:3);
\draw[color=darkred] (18:3) -- (18:2); 
\draw[color=yellow, line width=3pt] (90:3) -- (90:2);
\draw[color=yellow, line width=3pt](162:3) -- (162:2);
\draw[color=yellow, line width=3pt]( 234:3) -- (234:2);
\draw[color = yellow, line width=3pt](306:3) -- (306:2);
\draw[color=darkred] (-18:1) -- (54:1); 
\draw[color=yellow, line width=3pt] (54:1) -- (126:1);
\draw[color=darkred](126:1) -- ( 198:1);
\draw[color=yellow, line width=3pt]( 198:1) -- (270:1);
\draw[color = yellow, line width=3pt](270:1) -- (-18:1);
\draw[color=yellow, line width=3pt] (-18:1) -- (-18:2);
\draw[color=yellow, line width=3pt] (54:1) -- (54:2);
\draw[color=yellow, line width=3pt](126:1) -- (126:2);
\draw[color=yellow, line width=3pt]( 198:1) -- ( 198:2);
\draw[color = darkred](270:1) -- (270:2);
\draw[color=yellow, line width=3pt] (-18:2) -- (18:2); 
\draw[color=yellow, line width=3pt] (18:2) -- (54:2);
\draw[color=darkred](54:2) -- (90:2);
\draw[color=yellow, line width=3pt]( 90:2) -- (126:2);
\draw[color = darkred](126:2) -- (162:2);
\draw[color=yellow, line width=3pt] (162:2) -- (198:2); 
\draw[color=darkred] (198:2) -- (234:2);
\draw[color=yellow, line width=3pt](234:2) -- (270:2);
\draw[color=yellow, line width=3pt]( 270:2) -- (306:2);
\draw[color = darkred](306:2) -- (-18:2);
\end{scope}
\begin{scope}[yscale = {1}, xscale={1},decoration={markings, mark=at
     position 0.5 with {\arrow{>}}},postaction={decorate}, xshift=16cm]
\draw[color=darkblue] (18:3) -- (90:3); 
\draw[color=darkred] (90:3) -- (162:3);
\draw[color=darkblue](162:3) -- ( 234:3);
\draw[color=darkred]( 234:3) -- (306:3);
\draw[color = darkgreen](306:3) -- (18:3);
\draw[color=darkred] (18:3) -- (18:2); 
\draw[color=darkgreen] (90:3) -- (90:2);
\draw[color=darkgreen](162:3) -- (162:2);
\draw[color=darkgreen]( 234:3) -- (234:2);
\draw[color = darkblue](306:3) -- (306:2);
\draw[color=darkred] (-18:1) -- (54:1); 
\draw[color=darkblue] (54:1) -- (126:1);
\draw[color=darkred](126:1) -- ( 198:1);
\draw[color=darkblue]( 198:1) -- (270:1);
\draw[color = darkgreen](270:1) -- (-18:1);
\draw[color=darkblue] (-18:1) -- (-18:2);
\draw[color=darkgreen] (54:1) -- (54:2);
\draw[color=darkgreen](126:1) -- (126:2);
\draw[color=darkgreen]( 198:1) -- ( 198:2);
\draw[color = darkred](270:1) -- (270:2);
\draw[color=darkgreen] (-18:2) -- (18:2); 
\draw[color=darkblue] (18:2) -- (54:2);
\draw[color=darkred](54:2) -- (90:2);
\draw[color=darkblue]( 90:2) -- (126:2);
\draw[color = darkred](126:2) -- (162:2);
\draw[color=darkblue] (162:2) -- (198:2); 
\draw[color=darkred] (198:2) -- (234:2);
\draw[color=darkblue](234:2) -- (270:2);
\draw[color=darkgreen]( 270:2) -- (306:2);
\draw[color = darkred](306:2) -- (-18:2);
\end{scope}
  \end{tikzpicture}
  \caption{The three bi-colored cycle in $G_c$.}
  \label{fig:dodpath}
\end{figure}
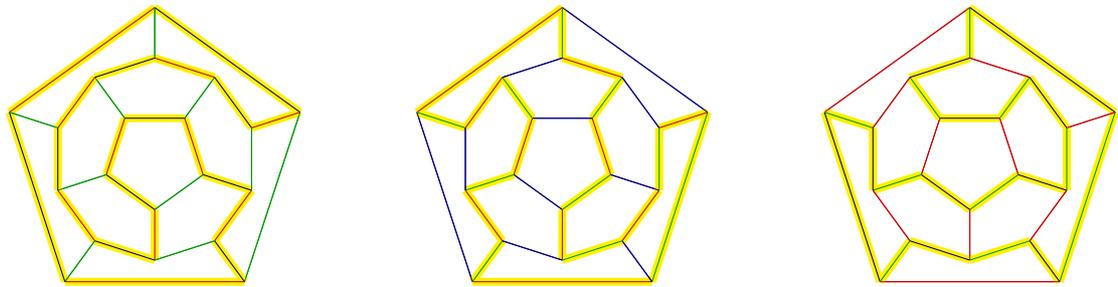
On the other hand, there exist some colorings of $G$ which are essentially different, for example one can  perform a $\frac{2\pi}5$ rotation of the coloring $c$ to obtain a coloring $c'$, therefor $c'$ and $c$ are not $\tau$-equivalent.
\end{proof}
\begin{exa}
  The utility graph (also called $K_{3,3}$) does not satisfied the theorem~\ref{thm:coltaueq}.
\end{exa}
One can apply the same proof as before, figure~\ref{fig:k33}
\begin{figure}[!ht]
  \centering
  \begin{tikzpicture}[scale= 1.5]
    \input{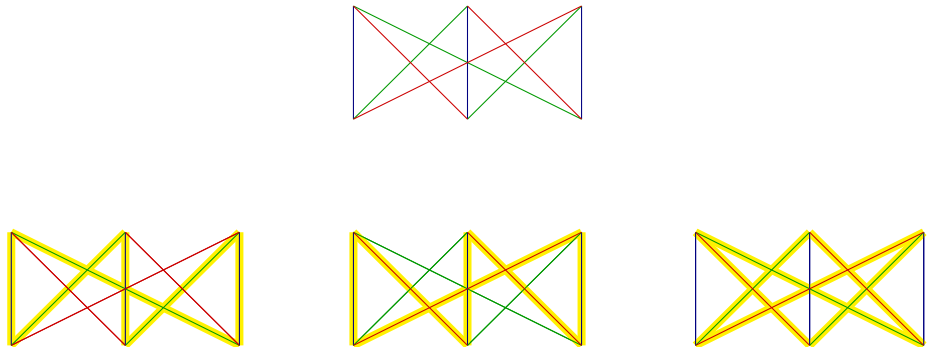}
  \end{tikzpicture}
  \caption{A coloring of the graph $K_{3,3}$, and the three bi-colored cycles for this coloring.}
  \label{fig:k33}
\end{figure}
illustrates it.

\bibliographystyle{alpha}
\bibliography{../../biblio}

\end{document}